\newcommand{\OO}{\mathscr{O}}
\newcommand{\MM}{\mathscr{M}}
\newcommand{\PP}{\mathbb{P}}
\newcommand{\kernel}{\textnormal{ker}\,}
\newcommand{\cokernel}{\textnormal{coker}\,}
\newcommand{\degree}{\textnormal{deg }}
\newcommand{\HHom}{\mathscr{H}om} 
\newcommand{\Hom}{\textnormal{Hom}}
\newcommand{\dimension}{\textnormal{dim}\,}
\newcommand{\rank}{\textnormal{rk}\,}
\newcommand{\Ext}{\textnormal{Ext}}
\newcommand{\EExt}{\mathscr{E}xt}
\newcommand{\Aut}{\textnormal{Aut}}
\newcommand{\al}{\alpha}
\newcommand{\cone}{\textnormal{cone}}
\newcommand{\Ac}{\mathcal{A}}
\newcommand{\Coh}{\textnormal{Coh}}
\newcommand{\Ap}{\mathcal{A}^p}
\newcommand{\arinj}{\ar@{^{(}->}}
\newcommand{\arsurj}{\ar@{->>}}
\newcommand{\areq}{\ar@{=}}
\renewcommand{\geq}{\geqslant}
\renewcommand{\leq}{\leqslant}
\renewcommand{\tilde}{\widetilde}
\renewcommand{\bar}{\overline}
\newcommand{\ra}{\rightarrow}
\newcommand{\lra}{\longrightarrow}
\def\b{\mathbf{b}}                           
\renewcommand{\P}{\mathbb{P}}
\newcommand{\cO}{\mathcal{O}}
\newtheorem{theorem}{Theorem}[section]
\newtheorem{lemma}[theorem]{Lemma}
\newtheorem{coro}[theorem]{Corollary}
\newtheorem{pro}[theorem]{Proposition}
\theoremstyle{definition}
\theoremstyle{remark}
\newtheorem{remark}[theorem]{Remark}
\begin{document}

\title{Representing stable complexes on projective spaces}

\author{Jason Lo}
\address{Department of Mathematics \\ 202 Mathematical Sciences Building \\ University of Missouri\\  Columbia MO 65211 \\ USA}
\curraddr{Taimali, Taiwan}
\email{jccl@alumni.stanford.edu}

\author[Ziyu Zhang]{Ziyu Zhang}
\address{Max-Planck Institute for Mathematics\\ Vivatsgasse 7, 53111 Bonn\\ Germany}
\curraddr{Department of Mathematics\\ University of Bath\\ Claverton Down\\ Bath, BA2 7AY\\ United Kingdom}
\email{zz505@bath.ac.uk}

\thanks{}
\subjclass[2010]{Primary 14D20, 14F05; Secondary 14J60}

\keywords{reflexive sheaves, polynomial stability, quotient stacks, Bridgeland stability, monads}

\begin{abstract}
We give an explicit proof of a Bogomolov-type inequality for $c_3$ of reflexive sheaves on $\PP^3$.  Then, using resolutions of rank-two reflexive sheaves on $\PP^3$, we prove that the closed points of some strata of the moduli of rank-two complexes that are both PT-stable and dual-PT-stable can be given the structure of quotient stacks.  Using monads, we apply the same techniques to $\PP^2$ and obtain similar results for  some strata of the moduli of Bridgeland-stable complexes.
\end{abstract}

\maketitle

\section{Introduction}

Let $X$ be any smooth projective threefold over $k$.  In previous work \cite[Section 4.2]{Lo3}, we considered the moduli functor
\begin{equation}\label{eq1}
\coprod_n \MM^{PT\cap PT^\ast}_{(r,d,\beta,n)}
\end{equation}
where the points $[E]$ of each  moduli functor $\MM^{PT\cap PT^\ast}_{(r,d,\beta,n)}$ represent complexes $E \in D^b(X)$  satisfying:
\begin{itemize}
\item[] $E$ is both PT-stable and PT-dual stable, and $ch(E)= (r,d,\beta,n)$.
\end{itemize}
We also  observed in \cite[Section 4.2]{Lo3} that, when $r$ and $d$ are both integers that are coprime,  the points of \eqref{eq1} are in 1-1 correspondence with pairs of the form $([F],[Q^D])$, where
\begin{itemize}
\item $[F]$ is the isomorphism class of a $\mu$-stable reflexive sheaf $F$ on $X$ with
 \[
 (ch_0(F[1]), ch_1(F[1]), ch_2(F[1]))=(r,d,\beta);
 \]
 \item $[Q^D]$ is the isomorphism class of the dual $Q^D := R\HHom (Q,\OO_X)[3]$ of $Q$, where  $Q$ is a quotient of the 0-dimensional sheaf $\EExt^1 (F,\OO_X)$.
\end{itemize}
Under this correspondence, we have $F = H^{-1}(E)$ and $Q^D = H^0(E)$ for any point $[E]$ of $\coprod_n \MM^{PT\cap PT^\ast}_{(r,d,\beta,n)}$.   We can therefore think of the moduli \eqref{eq1} as parametrising $\mu$-stable reflexive sheaves $F$, with each isomorphism class occurring with multiplicity equal to the number of non-isomorphic quotients of $\EExt^1 (F,\OO_X)$.

This paper grew out of an attempt to find a more concrete and down-to-earth description of the objects parametrised by the moduli stack \eqref{eq1}, with the hope that it might help us understand whether the stack \eqref{eq1} is a quotient stack.  Objects in the derived category are often considered difficult to work with, because of the presence of quasi-isomorphisms (i.e.\ two very different-looking complexes can be isomorphic in the derived category).  In Sections \ref{sec-quotstackP3} and \ref{sec-P2}, we show that isomorphisms in the derived category for the objects in \eqref{eq1} can be understood as isomorphisms between diagrams in the category of coherent sheaves.

In Section \ref{sec-Bog}, we give a Bogomolov-type inequality for $\mu$-semistable reflexive sheaves $F$ on $\mathbb{P}^3$.  We point out, that it is already known that there is a bound for $ch_3$ in terms of $ch_0, ch_1$ and $ch_2$ for $\mu$-semistable reflexive sheaves on a smooth variety over a field of characteristic zero.  This is implicit in the proof of \cite[Main Theorem]{Mar}, for example (see also \cite[Section 3]{Langer} and \cite{Tod}).  However, we write down such an explicit bound for $ch_3$ in Theorem \ref{_theorem_main_}.  The proof of Theorem \ref{_theorem_main_} follows closely the ideas in \cite{BR}, and is deferred to Section \ref{section-proofmaintheorem}.  As an immediate consequence of this theorem, we obtain that the moduli stack \eqref{eq1} is of finite type (Corollary \ref{theorem-bigfunctorfinite}).

In Section \ref{sec-quotstackP3}, we build on the work of Mir\'{o}-Roig \cite{MR} and use particular 2-term locally free resolutions of reflexive sheaves to prove Theorem \ref{_quotient_coro_}, which says that the closed points of certain strata of the moduli stack \eqref{eq1} are in bijection with the closed points of certain quotient stacks, when $X = \mathbb{P}^3, ch_0=-2$ and $ch_1=1$.

In Section \ref{sec-P2}, we adapt the techniques in Section \ref{sec-quotstackP3} from $\PP^3$ to $\PP^2$, using the results on monads due to  Jardim \cite{Jardim-inst} and Jardim-Martins \cite{Jardim-Martins}.  This culminates in Theorem \ref{P2main-result},  that the closed points of certain strata of the moduli of Bridgeland-semistable objects in $D(\PP^2)$ are in bijection with the closed points of some quotient stacks.  We  hope that this brings us  one step closer to  understanding the connections between Bridgeland stability and the moduli of monads, a question posed at the end of the article \cite{JMR} by Jardim-Mir\'{o}-Roig.

\bigskip
\noindent
\textbf{Notation.} All schemes will be over an algebraically closed  field $k$ of characteristic zero. For a variety $X$, we will write $\Coh (X)$ to denote the category of coherent sheaves on $X$, and $D(X)$ to denote the bounded derived category of coherent sheaves.  For a coherent sheaf $F$ on $X$, we write $F^\ast$ to denote the usual sheaf dual, i.e.\ $F^\ast := \HHom (F,\OO_X)$.  For objects $E \in D(X)$, we write $E^\vee$ to denote the derived dual, i.e.\ $E^\vee := R\HHom (E,\OO_X)$; we also write $H^i(E)$ to denote the degree-$i$ cohomology of $E$.

For integers $i<j$, we write $D^{[i,j]}(X)=D^{[i,j]}_{\Coh (X)}(X)$ to denote the category of objects $E$ in $D(X)$ such that $H^s(E)=0$ for all $s<i$ and $s>j$.  For any nonnegative integer $d$, we write $\Coh_{\leq d}(X)$ to denote the category of coherent sheaves on $X$ supported in dimension at most $d$, and $\Coh_{\geq d}$ to denote the category of coherent sheaves on $X$ with no subsheaves supported in dimension lower than $d$.  For integers $0 \leq d < e$, we write $\langle \Coh_{\leq d}(X), \Coh_{\geq e}(X)[1]\rangle$ to denote the extension-closed subcategory of $D(X)$ generated by $\Coh_{\leq d}(X)$ and $\Coh_{\geq e}(X)[1]$; that is, the objects in $\langle \Coh_{\leq d}(X), \Coh_{\geq e}(X)[1]\rangle$ are the complexes $E$ such that $H^{-1}(E) \in \Coh_{\geq e}(X)$ and $H^0(E) \in \Coh_{\leq d}(X)$, and $H^s(E)=0$ for all $s \neq -1, 0$.

On a smooth projective threefold $X$, we write $\mathbb{D}(-)$ to denote the dualising functor $-^\vee[2] =R\HHom (-,\OO_X)[2]$.

\bigskip
\noindent
\textbf{Acknowledgements.} The authors would like to thank Zhenbo Qin for helpful discussions, Marcos Jardim for answering our questions on monads, and Arend Bayer for many helpful comments and pointing out an error in an earlier version of the manuscript.  The second author would also like to thank Zhenbo Qin and Dan Edidin, for the invitation to visit the University of Missouri, where part of this work was completed. He would also like to thank the support of SFB/TR 45 and Max-Planck-Institute for Mathematics. The authors are also grateful to the referee, whose helpful comments have led to quite a few improvements.

\section{A Bogomolov-type inequality for $\mu$-semistable reflexive sheaves}\label{sec-Bog}

We have the following Bogomolov-type inequality for $ch_3$ of $\mu$-semistable reflexive sheaves on $\mathbb{P}^3$:

\begin{theorem}\label{_theorem_main_}
Let $F$ be a $\mu$-semistable reflexive sheaf on $\mathbb{P}^3$.  Writing $r = ch_0 (F)$, $c_1 = ch_1(F)$, $ch_2 = ch_2(F)$ and $ch_3 = ch_3(F)$, we have the following bound of $ch_3(F)$ that only depends on $ch_0(F), ch_1(F)$ and $ch_2(F)$:
\begin{eqnarray}
|ch_3| &<& 2\left(\frac{|c_1|}{r}+r+4-ch_2+\frac12\sum_{i=1}^r \left(\frac{|c_1|}{r}+r\right)^2\right) \notag\\
& & \cdot \left(-ch_2+\frac12\sum_{j=1}^r \left(\frac{|c_1|}{r}+r\right)^2\right)+\frac{n}{6} \left( \frac{|c_1|}{r}+r+3 \right)^3 \notag\\
& & +\left(2|ch_2|+\frac{11}{6}|c_1|+r\right), \label{eq18}
\end{eqnarray}
\end{theorem}

\begin{coro}\label{theorem-bigfunctorfinite}
When $X = \mathbb{P}^3$, the moduli space \eqref{eq1}  is of finite type over $k$.
\end{coro}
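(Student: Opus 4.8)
The plan is to deduce Corollary \ref{theorem-bigfunctorfinite} from Theorem \ref{_theorem_main_} by showing that the Chern characters of all the complexes parametrised by \eqref{eq1} lie in a finite set, and then invoking a standard boundedness result for the moduli of such complexes. The starting point is the correspondence recalled in the introduction: for $r,d$ coprime, a point $[E]$ of $\coprod_n \MM^{PT\cap PT^\ast}_{(r,d,\beta,n)}$ is equivalent to a pair $([F],[Q])$ with $F=H^{-1}(E)$ a $\mu$-stable reflexive sheaf on $\PP^3$ with $(ch_0(F[1]),ch_1(F[1]),ch_2(F[1]))=(r,d,\beta)$, and $Q=H^0(E)$ a $0$-dimensional quotient of $\EExt^1(F,\OO_X)$. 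Since $ch(E)=ch(Q)-ch(F)$ and $ch_0,ch_1,ch_2$ of $E$ are fixed once $(r,d,\beta)$ and $\mathrm{length}(Q)$ are fixed, I would first observe that controlling the discrete invariants of the family reduces to bounding $ch_3(E)$, equivalently to bounding $ch_3(F)$ from below (a $0$-dimensional quotient $Q$ only increases $ch_3$, and $0\le \mathrm{length}(Q)\le \mathrm{length}(\EExt^1(F,\OO_X))=ch_3(\EExt^1(F,\OO_X))$, which is itself controlled once $ch_3(F)$ is).

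The key step is then the following: $\mu$-stable is in particular $\mu$-semistable, so Theorem \ref{_theorem_main_} applies to $F$ and gives a bound for $ch_3(F)$ purely in terms of $ch_0(F),ch_1(F),ch_2(F)$, which are $(-r,-d,\beta)$ up to sign and hence fixed. Combined with the obvious bound $ch_3(F)\le ch_3(E)$ coming from the short exact sequence $0\to F[1]\to E\to Q\to 0$ in the heart $\langle \Coh_{\le 0}(\PP^3),\Coh_{\ge 1}(\PP^3)[1]\rangle$, together with the finiteness of $\mathrm{length}(Q)$ forced by the bound on $ch_3(\EExt^1(F,\OO_X))$, we get that $ch_3(E)=n$ ranges over a finite set of integers. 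Therefore $\coprod_n \MM^{PT\cap PT^\ast}_{(r,d,\beta,n)}$ is in fact a \emph{finite} disjoint union of the individual moduli functors $\MM^{PT\cap PT^\ast}_{(r,d,\beta,n)}$.

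It then remains to argue that each $\MM^{PT\cap PT^\ast}_{(r,d,\beta,n)}$, with all of $r,d,\beta,n$ fixed, is of finite type over $k$. This is where I would appeal to boundedness of the families of objects involved: the reflexive sheaves $F$ with fixed $ch_0,ch_1,ch_2$ and $ch_3$ bounded below (equivalently, with $ch_3(F)$ in a fixed range by Theorem \ref{_theorem_main_}) that are $\mu$-semistable form a bounded family — this is a classical consequence of the Bogomolov inequality on the surface side together with Theorem \ref{_theorem_main_}'s control of the last Chern character, and the $0$-dimensional quotients $Q$ of bounded length automatically form a bounded family — so that the PT$\cap$PT$^\ast$-stable complexes $E$ built from them also form a bounded family. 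Since each $\MM^{PT\cap PT^\ast}_{(r,d,\beta,n)}$ is an algebraic stack (as established in \cite{Lo3}), boundedness of the underlying family of objects gives finite type over $k$, and a finite disjoint union of finite-type stacks is of finite type.

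The main obstacle, and the only genuinely substantive point, is precisely the passage from ``$ch_3(F)$ is bounded'' to ``the family of $\mu$-semistable reflexive sheaves $F$ with these invariants is bounded'': one needs to know that fixing $(ch_0,ch_1,ch_2)$ and bounding $ch_3$ pins down a bounded family of $\mu$-semistable reflexive sheaves on $\PP^3$. This follows from standard boundedness results for semistable sheaves (Grothendieck's lemma / Maruyama / Langer), but it must be invoked in the correct form for reflexive sheaves; everything else in the argument is bookkeeping with Chern characters through the exact triangle $F[1]\to E\to Q$ and the fact proved in Theorem \ref{_theorem_main_} that the relevant bound depends only on $ch_0,ch_1,ch_2$.
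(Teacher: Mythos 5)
Your proposal is correct and follows essentially the same route as the paper: Theorem \ref{_theorem_main_} bounds $ch_3$ of the reflexive sheaf $H^{-1}(E)$ (and hence the length of $H^0(E)$ and the possible values of $n$), so only finitely many Chern characters occur, and the finite-typeness of each piece is then delegated to a boundedness result from the literature. The paper simply cites \cite[Proposition 3.4]{Lo1} for exactly the step you identify as the ``only genuinely substantive point,'' so the external input you would need is already packaged there rather than having to be re-derived from Grothendieck--Maruyama--Langer style boundedness.
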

\begin{proof}
This follows immediately from \cite[Proposition 3.4]{Lo1} and Theorem \ref{_theorem_main_}.
\end{proof}

Since the proof of Theorem \ref{_theorem_main_} is a little long, we have placed it in Section  \ref{section-proofmaintheorem}.

\section{Quotient stacks of stable complexes}\label{sec-quotstackP3}

The aim of this section is to show that the closed points of certain strata of the moduli stack \eqref{eq1} can be given the structure of quotient stacks when $X = \mathbb{P}^3, r=-2$ and $c_1=1$ .

\subsection{When $X$ is an arbitrary smooth projective threefold}

Let us start with the following observation:  for any smooth projective variety $X$ of dimension $n$ over a field $k$, and any complex $E \in D(X)$ with cohomology only in degrees $-1$ and $0$ such that $H^0(E)$ is supported in dimension 0, write $F = H^{-1}(E)$ and $Q = H^0(E)$.  Then
$E$ is represented by a class in $\Ext^2_{D(X)}(Q,F)$, where
\begin{align}
  \Ext^2_{D(X)} (Q,F) &\cong \Hom (Q,F[2])  \notag\\
   &\cong \Hom_{D(X)} (F^\vee [-2],Q^\vee) \notag\\
   &\cong \Hom_{D(X)} (F^\vee [-2],Q^D) \text{ where $Q^D := \EExt^{n} (Q,\OO_X)$} \notag\\
   &\cong \Hom_{D(X)} (F^\vee [n-2],Q^D) \label{eqn-transform}
\end{align}
The conditions that $E \in D^{[-1,0]}_{\Coh (X)}(X)$ and $H^0(E)$ is 0-dimensional are satisfied by, for instance, complexes that are polynomial stable at the large volume limit when $X$ is a  surface \cite[Lemma 4.2]{Bayer}, and complexes that are stable with respect to PT-stability when $X$ is a threefold \cite[Lemma 3.3]{Lo1}.

Now, let $X$ be a smooth projective threefold.  Write $\Ap$ to denote the subcategory $\langle \Coh_{\leq 1}(X), \Coh_{\geq 2}(X)[1]\rangle$ of $D(X)$.  Note that $\Ap$ is the heart of a bounded t-structure on $D(X)$, and is an Abelian category.   When $r$ and $d$  are coprime integers,  the points $[E]$ of \eqref{eq1} are in 1-1 correspondence with isomorphism classes of complexes $E$ of Chern character $ch=(r,d,\beta,n)$ satisfying (see \cite[Proposition 4.3]{Lo3}):
\begin{itemize}
\item $E \in \Ap$;
\item $H^{-1}(E)$ is a $\mu$-stable reflexive sheaf;
\item $H^0(E)$ is a 0-dimensional sheaf;
\item the map $H^2 (\delta)$ is surjective, where $\delta$ is the connecting morphism in the exact triangle in $D(X)$
    \begin{equation}\label{eqn-canonicaltridual}
      H^0(E)^\vee \to E^\vee \to H^{-1}(E)^\vee[-1] \overset{\delta}{\to} H^0(E)^\vee[1],
    \end{equation}
    which is obtained by dualising the canonical exact triangle
    \begin{equation}\label{eqn-canonicaltri}
      H^{-1}(E)[1] \to E \to H^0(E) \to H^{-1}(E)[2].
    \end{equation}
\end{itemize}

As a first step towards producing strata of \eqref{eq1} whose closed points have the structure of quotient stacks, we consider the following two sets:
\begin{align*}
A_1 = \{ E \in \Ap : \, &H^{-1}(E) \text{ is reflexive}, \\
&H^0(E) \in \Coh_{\leq 0}(X), \\
    & \text{the map }H^2(\delta) : \EExt^1(H^{-1}(E),\OO_X) \to H^0(E) \text{ is surjective} \}
\end{align*}
and
\begin{align*}
A_2 = \{ \text{morphisms } F^\vee[1] \overset{t}{\to} Q \text{ in }\Ap: \, &t \text{ is a surjection in }\Ap, \\
&F  \text{ is a reflexive sheaf}, \\
&  Q \in \Coh_{\leq 0}(X)\}.
\end{align*}
On the set $A_1$, we define the equivalence relation $\thicksim_1$ to be isomorphism in the derived category $D(X)$.  For objects in $A_2$ (which are morphisms in the derived category of the form $F^\vee [1] \to Q$), we say two  morphisms $F_1^\vee[1] \to Q_1$ and $F_2^\vee[1] \to Q_2$ in $\Ap$ are equivalent with respect to $\thicksim_2$  if there is a commutative diagram in the derived cateogory
\begin{equation}\label{eq11}
\xymatrix{
 F_1^\vee[1] \ar[r] \ar[d]^\cong & Q_1 \ar[d]^\cong \\
 F_2^\vee[1] \ar[r] & Q_2
}
\end{equation}
where the vertical maps are isomorphisms.  We have:

\begin{pro}\label{pro-objmor1to1corr}
Let $X$ be a smooth projective threefold over $k$.   There is a bijection between the set of equivalence classes $A_1/\thicksim_1$ and $A_2/\thicksim_2$.
\end{pro}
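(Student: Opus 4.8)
The plan is to construct maps in both directions between $A_1/\!\thicksim_1$ and $A_2/\!\thicksim_2$ and check they are mutually inverse. Given $E \in A_1$, I would dualise the canonical triangle \eqref{eqn-canonicaltri}. Since $H^0(E) \in \Coh_{\leq 0}(X)$ we have $H^0(E)^\vee \cong H^0(E)[-3]$, hence $H^0(E)^\vee[1] \cong H^0(E)[-2]$; rotating the dualised triangle \eqref{eqn-canonicaltridual} and applying $[2]$, the connecting map $\delta$ yields a morphism $H^{-1}(E)^\vee[1] \to H^0(E)$ in $D(X)$ whose cohomology in degree $2$ is exactly $H^2(\delta)$. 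Writing $F = H^{-1}(E)$ and $Q = H^0(E)$, I need to see this is a morphism $F^\vee[1] \to Q$ in $\Ap$ and that it is an epimorphism in $\Ap$: the first follows because $F$ reflexive forces $F^\vee[1]$ to lie in $\langle \Coh_{\leq 1}, \Coh_{\geq 2}[1]\rangle$ (its cohomology sheaves are $H^{-1}=F^*$, reflexive hence in $\Coh_{\geq 2}$, wait—rather $\mathscr{E}xt^1(F,\OO)$ in degree $1$ is $0$-dimensional, etc.), and the second is the translation of "$H^2(\delta)$ surjective" into epimorphism in the Abelian category $\Ap$, using the long exact cohomology sequence of a triangle relative to the t-structure with heart $\Ap$. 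This sends $A_1$ to $A_2$, and $\thicksim_1$-equivalent objects to $\thicksim_2$-equivalent ones since an isomorphism $E_1 \cong E_2$ in $D(X)$ induces isomorphisms on $H^{-1}$ and $H^0$ making \eqref{eq11} commute by naturality of the dualising functor.

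Conversely, given a morphism $t \colon F^\vee[1] \to Q$ in $A_2$, I would take $E := \cone(Q[-1] \xrightarrow{t[-1]} F^\vee[1][-1]) $—more precisely, form the triangle $F^\vee[1] \xrightarrow{t} Q \to E' \to F^\vee[2]$ and set $E$ to be the appropriate shift/dual so that its canonical triangle recovers $t$ after dualising. The cleanest route: reverse the computation above. Dualising $t\colon F^\vee[1]\to Q$ gives $Q^\vee \to F[-1]$, i.e.\ $Q[-3] \to F[-1]$, i.e.\ a map $Q[-2] \to F$, i.e.\ an element of $\Ext^2(Q,F) \cong \Hom(Q, F[2])$; cone this to produce $E$ with $H^{-1}(E) = F$, $H^0(E) = Q$ (after checking the long exact sequence collapses correctly, using $Q$ $0$-dimensional and $F$ a sheaf), and whose associated $\delta$ is $t$ up to the identifications above. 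Then $H^2(\delta) = H^2(t)$ is surjective because $t$ is epi in $\Ap$ — again via the cohomology sequence with respect to the $\Ap$-t-structure. This is well-defined on $\thicksim_2$-classes because \eqref{eq11} gives, after applying the (contravariant, hence rotating) dualising functor and coning, an isomorphism of the resulting objects in $D(X)$.

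Finally I would check the two constructions are inverse to each other, which amounts to uniqueness of the cone: an object in $D^{[-1,0]}$ with prescribed $H^{-1}, H^0$ is determined up to isomorphism (non-canonically) by the class in $\Ext^2(H^0, H^{-1})$, and a morphism $F^\vee[1]\to Q$ is determined by the corresponding class in $\Ext^2(Q,F)$ under the chain of isomorphisms \eqref{eqn-transform} with $n=3$; the two constructions literally pass through the same $\Ext^2$ group, so the round trips are identities on equivalence classes. The main obstacle I expect is the bookkeeping of shifts and the contravariance of $\mathbb{D} = (-)^\vee[2]$ — making sure that "surjective" in the sense of $H^2(\delta)$ matches "epimorphism in $\Ap$", and that the isomorphism-of-diagrams condition \eqref{eq11} corresponds exactly to isomorphism in $D(X)$ of the coned objects (in particular that no information is lost in the direction $A_2 \to A_1$, i.e.\ that a $\thicksim_2$-class does not blow up into several $\thicksim_1$-classes). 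Verifying that $F^\vee[1]$ and $Q$ genuinely lie in the heart $\Ap$, so that it makes sense to speak of epimorphisms there, is a small but essential point: for $Q \in \Coh_{\leq 0}(X)$ this is clear, and for $F$ reflexive one uses that $F^* \in \Coh_{\geq 2}$ together with $\mathscr{E}xt^{\geq 2}(F,\OO_X)$ vanishing (codimension of the singularity locus) so that $F^\vee[1]$ has cohomology only in degrees $-1$ and $0$ of the required type.
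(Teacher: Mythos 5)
Your proposal is correct and follows essentially the same route as the paper: both pass through the chain of isomorphisms \eqref{eqn-transform} with $n=3$ to identify the class of $E$ in $\Ext^2(Q,F)$ with a morphism $t\colon F^\vee[1]\to Q$, both identify $t$ with the composite through $H^2(\delta)$ so that surjectivity of $H^2(\delta)$ translates into $t$ being an epimorphism in $\Ap$, and both recover $E$ (up to shift and dual) as the cone of $t$ to see that no information is lost. The only cosmetic difference is that you build an explicit two-sided inverse where the paper verifies surjectivity and injectivity of the forward map separately, and you are slightly more explicit about why $F^\vee[1]$ lies in the heart $\Ap$.
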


\begin{proof}
 Take an equivalence class $[E]$  in $A_1/\thicksim_1$, and write $F = H^{-1}(E)$ and $Q = H^0(E)$.  Using the canonical exact triangle \eqref{eqn-canonicaltri}, we can consider $E$ as a representative of a class in $\Ext^2(Q,F)$.  The  string of isomorphisms \eqref{eqn-transform} with $n=3$ sends the complex $E$ to a morphism $t : F^\vee[1] \to Q^D$.

Consider the composition
\[
  F^\vee[1] \overset{c}{\twoheadrightarrow} H^0(F^\vee[1]) \overset{H^2(\delta)}{\twoheadrightarrow} H^3(Q^\vee)=Q^D,
\]
where $c$ is the canonical morphism; this composite map is exactly the morphism $t$ from the previous paragraph.  The composite is also a surjection in $\Ap$,  because $c$ is a surjection in $\Ap$, while $H^2(\delta)$ is a surjection in $\Coh_{\leq 0} (X)$ (by the definition of $A_1$), hence a surjection in $\Ap$.  It is then easy to see that sending $E$ to $t$ gives us a well-defined function $f : A_1/\thicksim_1 \to A_2/\thicksim_2$.

Given any morphism $t : F^\vee[1] \to Q^D$ in $A_2$, we can obtain an object $E \in \Ap$ with $H^{-1}(E)\cong F$ and $H^0(E)\cong Q$ using the string of isomorphisms \eqref{eqn-transform}.  The surjectivity of $t$ as a morphism in $\Ap$ implies the surjectivity of $H^0(t)=H^2(\delta)$.  Hence $f$ is surjective.

Lastly, let us show the injectivity of $f$.  Suppose two equivalence classes $[E_1], [E_2]$ in $A_1/\thicksim_1$ are taken by $f$ to the same equivalence class in $A_2/\thicksim_2$.  Then $H^{-1}(E_1)^\vee [1] \cong H^{-1}(E_2)^\vee [1]$ and $H^0(E_1) \cong H^0(E_2)$.  Therefore, $H^i (E_1) \cong H^i(E_2)$ for all $i$.  Replacing the $E_i$ by isomorphic complexes in the derived category if necessary, we can assume that $H^i(E_1)= H^i(E_2)$ for all $i$.  Write $F = H^{-1}(E_i)$ and $Q' = H^3 (H^0(E_i)^\vee)$.  That $f([E_1])=f([E_2])$ means there is a commutative diagram in $\Ap$
\[
\xymatrix{
  F^\vee[1] \ar[r]^{t_1} \ar[d]_\thicksim^{j_1} & Q' \ar[d]_\thicksim^{j_2}  \\
  F^\vee[1] \ar[r]^{t_2} & Q'
}
\]
where each $t_i$ is the morphism that $f$ sends $E_i$ to, and each  $j_i$ is an isomorphism.  Then the $E_i^\vee$ can be recovered as the cones of the morphisms $t_i$  (up to shift).  Hence the $E_i$ must be isomorphic complexes in the derived category.  Therefore, $f$ is a bijection between $A_1/\thicksim_1$ and $A_2/\thicksim_2$ as claimed.
\end{proof}

\begin{remark}
 From the proof of Proposition \ref{pro-objmor1to1corr}, we can see that if $E \in A_1$ and $f$ sends the equivalence class of $E$ to that of the morphism $t : F^\vee[1] \to Q$, then $\cone (t)$ is isomorphic to $E^\vee[3]$.  Hence the map $A_2 \to A_1$ that takes a morphism $t : F^\vee[1] \to Q$ to the object $\cone (t)[-1]$ induces the inverse of $f$.
\end{remark}

Let us  define
\begin{align*}
  A_1^s(ch_0,ch_1,ch_2) := \{ E \in A_1 : \, &H^{-1}(E) \text{ is $\mu$-stable, and} \\
     & ch_i(H^{-1}(E)) = ch_i \text{ for $0\leq i \leq 2$} \}; \\
  A_2^s(ch_0,ch_1,ch_2) := \{ F^\vee[1] \overset{t}{\to} Q \text{ in } A_2 : & \, F \text{ is $\mu$-stable, and} \\
  & ch_i(F) = ch_i \text{ for $0\leq i \leq 2$} \}.
\end{align*}
Then Proposition \ref{pro-objmor1to1corr} immediately gives us a bijection
\begin{equation}\label{eq9}
  A_1^s (ch_0,ch_1,ch_2)/\thicksim_1 \leftrightarrow A_2^s (ch_0,ch_1,ch_2)/\thicksim_2
\end{equation}
for any $ch_0,ch_1,ch_2$.  On the other hand, if we write $\left| \coprod_n \MM^{PT \cap PT^\ast}_{(r,d,\beta,n)} \right|$ to denote the set of $k$-valued points of the moduli stack $\coprod_n \MM^{PT \cap PT^\ast}_{(r,d,\beta,n)}$, then \cite[Proposition 4.3]{Lo3} gives
\begin{equation}\label{eq8}
\left| \coprod_n \MM^{PT \cap PT^\ast}_{(-r,-d,-\beta,n)} \right| = A_1^s (r,d,\beta)/\thicksim_1
\end{equation}
when $r,d$ are coprime integers.

We can also define $$A_1^s(ch;l)=\{E\in A_1^s(ch_0,ch_1,ch_2): ch_3(H^{-1}(E))=ch_3, \textnormal{length}(H^0(E))=l\}.$$
Then putting \eqref{eq9} and \eqref{eq8} together, we have shown that there is a series of bijections
\begin{multline}\label{eq19}
\left| \coprod_{ch_3} \MM^{PT \cap PT^\ast}_{-ch} \right| = A_1^s (ch_0,ch_1,ch_2)/\thicksim_1
= \coprod_{ch_3,l} A_1^s(ch;l)/\thicksim_1\\
\leftrightarrow \coprod_{ch_3,l} A_2^s(ch;l)/\thicksim_2 = A_2^s (ch_0,ch_1,ch_2)/\thicksim_2
\end{multline}
where we define
\[
A_2^s(ch;l) := \{ F^\vee[1] \to Q \text{ in } A_2^s(ch_0,ch_1,ch_2) : \, ch(F)=ch, \text{length}(Q)=l\}.
\]

We further point out that, the first half of equation \eqref{eq19} can even be understood on the level
of moduli functors. In fact, if we define $\mathcal{A}_1^s(ch;l)$ to be the substack of $\coprod_{ch_3} \MM^{PT \cap PT^\ast}_{-ch}$ consisting of complexes in $A_1^s(ch;l)$, then

\begin{lemma}\label{_stratification_lemma_}
We have a locally closed stratification of the moduli functor \eqref{eq1} as
\begin{equation}\label{_stratification_functor_}
\coprod_{ch_3} \MM^{PT \cap PT^\ast}_{-ch} = \coprod_{ch_3,l}\mathcal{A}_1^s(ch;l).
\end{equation}
\end{lemma}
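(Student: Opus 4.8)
The plan is to establish \eqref{_stratification_functor_} in two stages: first that its right-hand side is a disjoint decomposition exhausting the left-hand side, and then that each $\mathcal{A}_1^s(ch;l)$ is a locally closed substack. The first stage is essentially formal. By \eqref{eq8}, every object $E$ appearing on the left lies in $A_1^s(ch_0,ch_1,ch_2)$, hence carries two well-defined discrete invariants, $ch_3(H^{-1}(E))\in\ZZ$ and $l=\textnormal{length}(H^0(E))\in\ZZ_{\geq 0}$, which single out exactly one of the sets $A_1^s(ch;l)$ to which $E$ belongs; since both invariants are preserved by base change, each $\mathcal{A}_1^s(ch;l)$ is a substack and the decomposition is disjoint. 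I would also record at this point the identity $ch(E)=-ch(H^{-1}(E))+ch(H^0(E))$ coming from the canonical triangle \eqref{eqn-canonicaltri}: because $H^0(E)\in\Coh_{\leq 0}(X)$ and $ch_i(H^{-1}(E))$ is fixed for $i\leq 2$, this forces $ch_3(H^{-1}(E))$ and $l$ to differ by a constant over any single component $\MM^{PT\cap PT^\ast}_{(-r,-d,-\beta,n)}$. Thus, over such a component, carving out $\mathcal{A}_1^s(ch;l)$ amounts to imposing only the condition $\textnormal{length}(H^0(E))=l$.

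For local closedness I would argue by testing on families. Let $\mathcal{E}$ be a family of $PT\cap PT^\ast$-stable complexes over a scheme $S$; replacing $S$ by its connected components we may assume the numerical type is constant, so by the reduction above it suffices to show that $\{s\in S:\textnormal{length}(H^0(\mathcal{E}_s))=l\}$ is locally closed. Since $\mathcal{E}$ may be taken to lie in $D^{[-1,0]}(X\times S)$ (a standard fact about families whose derived fibres lie in $D^{[-1,0]}$, implicit in the set-up of \cite{Lo3}), the base-change spectral sequence, applied to the top cohomology of $\mathcal{E}$, yields $\mathcal{H}^0(\mathcal{E})\otimes_{\OO_S}k(s)\cong H^0(\mathcal{E}_s)$ for all $s$; in particular the coherent sheaf $\mathcal{T}:=\mathcal{H}^0(\mathcal{E})$ on $X\times S$ has $0$-dimensional fibres over $S$, so $\Supp\mathcal{T}$ is finite over $S$. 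Pushing forward along $\pi:X\times S\to S$ and using base change for this affine morphism, one gets $\dim_{k(s)}(\pi_*\mathcal{T})\otimes_{\OO_S}k(s)=\textnormal{length}(H^0(\mathcal{E}_s))$; upper semicontinuity of the fibre dimension of the coherent sheaf $\pi_*\mathcal{T}$ then makes $\{s:\textnormal{length}(H^0(\mathcal{E}_s))\geq l'\}$ closed for every $l'$, so each level set is locally closed. As $\mathcal{A}_1^s(ch;l)$ is therefore a locally closed substack of the algebraic stack $\coprod_n\MM^{PT\cap PT^\ast}$, it is itself algebraic, and \eqref{_stratification_functor_} is the stratification claimed.

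The step I expect to be the real crux is the input just used: that a family of $PT$-stable (hence $D^{[-1,0]}$) complexes can be taken in $D^{[-1,0]}(X\times S)$, equivalently that forming the relative $\mathcal{H}^0$ commutes with base change to fibres. This is standard in the theory of moduli of complexes and is already built into the foundations of \cite{Lo3}, so I would simply cite it; granting it, everything else is the semicontinuity bookkeeping above together with the elementary numerical relation between $ch_3(H^{-1}(E))$ and $\textnormal{length}(H^0(E))$. As a byproduct, the closedness of $\{\textnormal{length}(H^0)\geq l'\}$ also shows that the closure of each $\mathcal{A}_1^s(ch;l)$ meets only strata with larger $l$, so the decomposition satisfies the usual frontier condition as well.
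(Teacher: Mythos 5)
Your proposal is correct and follows essentially the same route as the paper: both reduce to showing that each $\mathcal{A}_1^s(ch;l)$ is a locally closed substack by stratifying the base of a flat family according to the discrete invariants $ch(H^{-1}(E))$ and $\textnormal{length}(H^0(E))$. You merely supply the details the paper leaves implicit (base change for the top cohomology sheaf of a family in $D^{[-1,0]}$ and upper semicontinuity of the fibre dimension of $\pi_*\mathcal{H}^0(\mathcal{E})$), together with the pleasant observation that on a fixed component $ch_3(H^{-1}(E))$ and $l$ determine one another.
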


\begin{proof}
Since we already know that both sides parametrize exactly the same objects, we are left to show that each
functor $\mathcal{A}_1^s(ch;l)$ is a locally closed subfunctor of the functor \eqref{eq1}. In fact, whenever
we have a flat family of complexes over a scheme $B$ in the functor \eqref{eq1}, we can stratify its base $B$ into
locally closed subschemes, so that the strata are indexed by the Chern character of $H^{-1}(E)$ and the length
of $H^0(E)$, where $[E]$ is a fiber of the family. This shows that the  morphism of moduli stacks
$$\mathcal{A}_1^s(ch;l) \hookrightarrow \coprod_{ch_3} \MM^{PT \cap PT^\ast}_{-ch}$$
is a locally closed embedding.
\end{proof}

\subsection{When $X = \mathbb{P}^3$}

Reflexive sheaves on a smooth projective threefold have two-term locally free resolutions.  Stable reflexive sheaves on $\PP^3$ with particular Chern characters  have very special two-term resolutions, in which all the terms are direct sums of twists of $\OO_{\PP^3}(1)$  (e.g.\ see \cite{MR}).  We now take advantage of such resolutions.  In the rest of this section, we fix $X = \mathbb{P}_k^3$.

By \cite[Lemma 2.7]{MR}, if $F$ is a stable reflexive sheaf on $\PP^3$ with rank $r$ and Chern classes $c_i$ satisfying
\begin{align}
&r=2, \, c_1=-1, \, c_2>4,  \notag\\
& c_3 = c_2^2-2sc_2+2s(s+1) \text{ where } 1 \leq s \leq (-1+\sqrt{4c_2-7})/2, \label{eq4}
\end{align}
then $F$ has a locally free resolution of the form
\begin{equation*}
  0 \to R^{-1} \to R^0 \to F \to 0
\end{equation*}
where
\begin{align}
  R^{-1} &= \OO_{\PP^3} (-s-2) \oplus \OO_{\PP^3} (s-1-c_2),  \notag\\
  R^0 &= \OO_{\PP^3} (-s-1)\oplus \OO_{\PP^3} (-1) \oplus \OO_{\PP^3} (-2) \oplus \OO_{\PP^3} (s-c_2). \label{eq5}
\end{align}
Therefore, for any  reflexive sheaf $F$ satisfying \eqref{eq4}, if we consider $F$ as a 1-term complex sitting at degree 0 in the derived category, then the object $F^\vee[1]$ is isomorphic in the derived category to a 2-term complex of the form $[(R^0)^\ast \to (R^{-1})^\ast]$ that sits at degrees $-1$ and 0.

As in the proof of \cite[Proposition 2.9]{MR},  there is an open subspace $V' \subset V := \PP (\Hom_{\Coh (X)} (R^{-1},R^0))$ such that the closed points of $V'$, up to the actions of the automorphism groups $\Aut (R^{-1})$ and $\Aut (R^0)$,  are in bijection with all the stable reflexive sheaves $F$ satisfying \eqref{eq4}, up to isomorphism.  The bijection is given by
\[
  (R^{-1} \overset{g}{\to} R^0) \mapsto \cokernel (g).
\]
 With  $R^{-1}, R^0$ as in \eqref{eq5}, let $R^\bullet$ denote a complex of the form $[R^{-1} \overset{g}{\to} R^0]$ for some map $g$, sitting at degrees $-1$ and 0. Then $d(R^\bullet) := (R^\bullet)^\vee[1]$ is also a 2-term complex of locally free sheaves sitting at degrees $-1$ and 0.  For any  coherent  sheaf $Q$, we have $\Hom_{D(\PP^3)}(d(R^\bullet),Q) \cong \Hom_{\Coh (\PP^3)}(H^0(d(R^\bullet)),Q)$.  This means that any morphism $d(R^\bullet) \to Q$ in the derived category $D(\PP^3)$ factors through the canonical map $d(R^\bullet) \to H^0(d(R^\bullet))$.  Therefore, the set
\[
  \{ f \in \Hom_{D(\PP^3)}(d(R^\bullet),Q) : H^0(f) \text{ is surjective in } \Coh (\PP^3) \}
\]
is in bijection with
\[
  \{ f \in \Hom_{\Coh(\PP^3)}(H^0(d(R^\bullet)),Q) : f \text{ is surjective in }\Coh (\PP^3)\}.
\]

Now, with $R^{-1}, R^0$ still as in \eqref{eq5}, let $A_3^s$ denote the set of diagrams in $\Coh (\mathbb{P}^3)$ of the form
\[
  (R^0)^\ast \overset{a}{\to} (R^{-1})^\ast \overset{b}{\to} Q
\]
satisfying the following conditions:
\begin{enumerate}
\item  $a$ is the dual of a map $R^{-1} \overset{g}{\to} R^0$ that corresponds to a point in $V'$;
\item $Q$ is a 0-dimensional sheaf on $\PP^3$;
\item $b$ is a surjective map of coherent sheaves;
\item the composition $ab=0$.
\end{enumerate}
We then place an equivalence relation $\thicksim_3$ on the set $A_3^s$, where two such diagrams
\[
    (R^0)^\ast \overset{a_1}{\to} (R^{-1})^\ast \overset{b_1}{\to} Q_1
\]
and
\[
    (R^0)^\ast \overset{a_2}{\to} (R^{-1})^\ast \overset{b_2}{\to} Q_1
\]
are defined to be equivalent with respect to $\thicksim_3$ if there is a commutative diagram in $\Coh (\PP^3)$
\[
\xymatrix{
  (R^0)^\ast \ar[r]^{a_1} \ar[d] & (R^{-1})^\ast \ar[r]^{b_1} \ar[d] & Q_1 \ar[d] \\
   (R^0)^\ast \ar[r]^{a_2}  & (R^{-1})^\ast \ar[r]^{b_2} & Q_2
}
\]
where all vertical arrows are isomorphisms of sheaves.  For any Chern character $ch=(ch_0,ch_1,ch_2,ch_3)$ on $X$ and any nonnegative integer $l$, let us also define
\begin{equation*}
 A_3^s(ch;l) = \{  (R^0)^\ast \overset{a}{\to} (R^{-1})^\ast \overset{b}{\to} Q  \text{ in } A_3^s : ch(\cokernel(a^\ast))=ch, \text{length}(Q)=l \}.
\end{equation*}
Then we have
\[
  A_2^s (ch_0,ch_1,ch_2) = \coprod_{ch_3,l} A_2^s (ch;l)
\]
and the following lemma:

\begin{lemma}\label{lemma6}
Let $X = \PP^3$.  For any $ch$ satisfying \eqref{eq4} and any $l$, there is a bijection
 \begin{equation}\label{eq10}
 A_2^s (ch;l)/\thicksim_2 \leftrightarrow A_3^s(ch;l)/\thicksim_3.
 \end{equation}
\end{lemma}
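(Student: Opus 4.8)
The statement asserts a bijection between equivalence classes of morphisms $F^\vee[1]\to Q$ (with $F$ $\mu$-stable reflexive satisfying \eqref{eq4} and $Q$ 0-dimensional of length $l$) and equivalence classes of two-step diagrams $(R^0)^\ast\xrightarrow{a}(R^{-1})^\ast\xrightarrow{b}Q$ of the specified kind. The natural strategy is to build a map in each direction using the two-term resolution \eqref{eq5} and check they are mutually inverse on equivalence classes.

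First I would go from right to left. Given a diagram $(R^0)^\ast\xrightarrow{a}(R^{-1})^\ast\xrightarrow{b}Q$ in $A_3^s(ch;l)$, condition (1) says $a = g^\ast$ for $g: R^{-1}\to R^0$ corresponding to a point of $V'$, so $\cokernel(a^\ast)=\cokernel(g)$ is a $\mu$-stable reflexive sheaf $F$ with $ch(F)=ch$ (by the Miró-Roig description recalled before the lemma). Since $a=g^\ast$ is injective (being the dual of a map whose cokernel is reflexive, hence the sheafification argument in \cite{MR}), the complex $R^\bullet=[R^{-1}\xrightarrow{g}R^0]$ represents $F$ as a 1-term complex in degree $0$, so $d(R^\bullet)=(R^\bullet)^\vee[1]$ is the 2-term complex $[(R^0)^\ast\xrightarrow{a}(R^{-1})^\ast]$ and is isomorphic in $D(\PP^3)$ to $F^\vee[1]$. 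The condition $ab=0$ means $b$ descends to a map $H^0(d(R^\bullet))=\cokernel(a)\to Q$, and by the discussion immediately preceding $A_3^s$ (using $\Hom_{D}(d(R^\bullet),Q)\cong \Hom_{\Coh}(H^0(d(R^\bullet)),Q)$) this is the same as a morphism $d(R^\bullet)\to Q$ in the derived category; surjectivity of $b$ in $\Coh$ translates to $H^0$ of this morphism being surjective, i.e.\ to the morphism being a surjection in $\Ap$ (as in the proof of Proposition \ref{pro-objmor1to1corr}, since $c$ is a surjection in $\Ap$ and a surjection onto a $0$-dimensional sheaf is a surjection in $\Ap$). Composing with the fixed isomorphism $F^\vee[1]\cong d(R^\bullet)$ gives an element of $A_2^s(ch;l)$.

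Conversely, given $t: F^\vee[1]\to Q$ in $A_2^s(ch;l)$, since $F$ satisfies \eqref{eq4} it admits a resolution \eqref{eq5}, so $F\cong \cokernel(g)$ for some $g: R^{-1}\to R^0$ with $[g]\in V'$ (by the bijection from \cite[Prop.\ 2.9]{MR}), and $F^\vee[1]\cong d(R^\bullet)=[(R^0)^\ast\xrightarrow{g^\ast}(R^{-1})^\ast]$. The morphism $t$ then corresponds, via $\Hom_D(d(R^\bullet),Q)\cong\Hom_{\Coh}(\cokernel(g^\ast),Q)$, to a map $\cokernel(g^\ast)\to Q$, i.e.\ to a map $b:(R^{-1})^\ast\to Q$ with $b\circ g^\ast=0$; surjectivity of $t$ in $\Ap$ forces $b$ surjective in $\Coh$. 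Setting $a=g^\ast$, the diagram $(R^0)^\ast\xrightarrow{a}(R^{-1})^\ast\xrightarrow{b}Q$ lies in $A_3^s$, and $\cokernel(a^\ast)=\cokernel(g^{\ast\ast})\cong\cokernel(g)=F$ (using reflexivity/local freeness so that $g^{\ast\ast}=g$), with $ch=ch(F)$ and length $l$; so it lies in $A_3^s(ch;l)$.

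Finally I would check well-definedness and bijectivity at the level of equivalence classes: a $\thicksim_3$-equivalence of diagrams (isomorphisms on all three terms) restricts to an isomorphism on $\cokernel(a^\ast)=F$ and on $Q$, hence produces a commuting square \eqref{eq11}, so the right-to-left map descends to $A_3^s(ch;l)/\thicksim_3\to A_2^s(ch;l)/\thicksim_2$; and the ambiguity in the left-to-right construction (the choice of $g$ realizing $F$, unique up to $\Aut(R^{-1})\times\Aut(R^0)$ by \cite{MR}, and the choice of isomorphism $F^\vee[1]\cong d(R^\bullet)$) is absorbed precisely by $\thicksim_3$. That the two maps are mutually inverse is then immediate, since both amount to the identification $\Hom_D(d(R^\bullet),Q)\cong\Hom_{\Coh}(H^0(d(R^\bullet)),Q)$ together with the Miró-Roig cokernel correspondence. \textbf{The main obstacle} I anticipate is bookkeeping the equivalence relations carefully — in particular checking that the $\Aut(R^{-1})\times\Aut(R^0)$-ambiguity in representing $F$ by a resolution, transported through dualization to $d(R^\bullet)$, matches exactly the isomorphisms allowed on $(R^0)^\ast$ and $(R^{-1})^\ast$ in $\thicksim_3$, so that the bijection is genuinely well-defined on classes rather than merely on representatives.
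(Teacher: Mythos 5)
Your proposal matches the paper's proof in structure and in substance: the paper defines the single map $h: A_3^s(ch;l) \to A_2^s(ch;l)$ (your right-to-left map), proves surjectivity of the induced map $\tilde{h}$ on equivalence classes exactly as in your left-to-right construction, and then devotes the bulk of the argument to the step you flag as the main obstacle --- showing that a $\thicksim_2$-equivalence of images forces a $\thicksim_3$-equivalence of diagrams --- using precisely the two tools you name, namely Mir\'{o}-Roig's uniqueness of the resolution up to $\Aut(R^{-1})\times\Aut(R^0)$ transported through the derived dual, together with the factorization of any morphism $d(R^\bullet)\to Q$ through the canonical surjection onto $H^0(d(R^\bullet))$. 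One parenthetical slip: $a=g^\ast$ is \emph{not} injective (its kernel is $F^\ast$, which has rank $2$); what you actually need, and what is true, is that $a^\ast=g$ is injective, so that $[R^{-1}\to R^0]$ is quasi-isomorphic to $F$ and hence $d(R^\bullet)\cong F^\vee[1]$.
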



\begin{proof}
We can define a map $h: A_3^s(ch;l) \to A_2^s(ch;l)$ by taking a diagram $(R^0)^\ast \overset{a}{\to} (R^{-1})^\ast \overset{b}{\to} Q$ to the following composition of morphisms in $D(\PP^3)$
\begin{equation}\label{eq16}
\xymatrix{
  & F^\vee[1] \ar[d]^\cong \\
 [(R^0)^\ast \ar[r]^a & (R^{-1})^\ast \ar[d]^b] \\
  & Q
}
\end{equation}
where we consider $[(R^0)^\ast \overset{a}{\to} (R^{-1})^\ast]$ as a complex sitting at degrees $-1$ and 0, and where the upper vertical map is any isomorphism in $D(\PP^3)$, $F$ being any  rank-two stable reflexive sheaf with resolution $0 \to R^{-1} \overset{a^\ast}{\to} R^0 \to F \to 0$.  It is clear that this operation induces a well-defined map $\tilde{h} : A_3^s(ch;l)/\thicksim_2 \to A_2^s (ch;l)/\thicksim_3$.

We begin by checking that $\tilde{h}$ is surjective.  Take any $F^\vee[1] \overset{t}{\to} Q$  in $A_2^s(ch;l)$.  Then $F$ has a resolution of the form $R^{-1} \overset{a^\ast}{\to} R^0$ for some $a$.  Write $d(R^\bullet)$ to denote the complex $[(R^0)^\ast \overset{a}{\to} (R^{-1})^\ast]$ (whose terms are at degrees $-1$ and $0$), and fix an  isomorphism $d(R^\bullet)  \overset{u}{\to} F^\vee[1]$ in $D(\PP^3)$.  Since $\Hom_{D(\PP^3)}(d(R^\bullet),Q) \cong \Hom_{\Coh (\PP^3)}(H^0(d(R^\bullet)),Q)$, the composition $tu$ factorises as
\[
d(R^\bullet) \to H^0(d(R^\bullet)) \to Q
\]
where the first map is the canonical map, a genuine chain map, and the second map is a map of coherent sheaves.  Therefore, $tu$ can be represented by a chain map from $d(R^\bullet)$ to $Q$.  We thus get a diagram of the form
\[
  (R^0)^\ast \to (R^{-1})^\ast \to Q,
\]
which is an object in $A_3^s(ch;l)$, and it is taken by $h$ into the equivalence class of $t$. Hence $\tilde{h}$ is surjective.

Next, we show that $\tilde{h}$ is injective.  Suppose we have two diagrams of the form
\begin{equation}\label{eqn-hinjtwodiagrams}
    (R^0)^\ast \overset{a_i}{\to} (R^{-1})^\ast \overset{b_i}{\to} Q_i
\end{equation}
in $A_3^s(ch;l)$ for $i=1,2$.  Let $F_i$ be the cokernel of $a_i^\ast : R^{-1} \to R^0$ for each $i$, and write $e_i$ to denote the chain map $[R^{-1} \overset{a_i^\ast}{\to} R^0] \to F_i$ induced by the short exact sequence
\[
 0 \to R^{-1} \overset{a_i^\ast}{\to} R^0 \to F_i \to 0 \text{ for each $i$}.
\]
Write $d(R^\bullet_i)$ to denote the complex $[(R^0)^\ast \overset{a_i}{\to} (R^{-1})^\ast]$ sitting at degrees $-1$ and 0 for each $i$.  Then $\tilde{h}$ takes the diagrams \eqref{eqn-hinjtwodiagrams} to the $\thicksim_2$-equivalence classes of the composite morphisms
\[
\xymatrix{
  F_i^\vee [1] \overset{e_i^\vee [1]}{\to} d(R_i^\bullet) \overset{b_i}{\to} Q_i.
  }
\]
(Here, we are abusing notation and writing $b_i$ to also denote the chain map it induces.)  Let $t_i=b_i e_i^\vee[1]$ for $i=1,2$.  Suppose that $t_1$ and $t_2$ are equivalent with respect to $\thicksim_2$.  Then by the definition of $\thicksim_2$ and our construction of $\tilde{h}$, there is a commutative diagram in $D(\PP^3)$
\begin{equation}\label{eqn-A2A3bigcommute}
\xymatrix{
  F_1^\vee[1] \ar[r]^{e_1^\vee[1]}_\cong \ar[d]^\cong_\beta & d(R^\bullet_1) \ar[r]^{b_1} &  Q_1 \ar[d]^\cong_\gamma \\
  F_2^\vee[1] \ar[r]^{e_2^\vee[1]}_\cong & d(R^\bullet_2) \ar[r]^{b_2}  & Q_2
}.
\end{equation}

    From the proof of \cite[Proposition 2.9]{MR}, the points of $V'$ correspond to isomorphism classes of rank-two stable reflexive sheaves on $\PP^3$.  Since $F_1$ and $F_2$ are isomorphic via the sheaf isomorphism   $\beta^\vee [1] : F_2 \to F_1$, there is a chain map
\begin{equation}\label{eq6}
\xymatrix{
  R^{-1} \ar[r]^{a_1^\ast} \ar[d]^{r^{-1}} & R^0 \ar[r] \ar[d]^{r^0} & F_1 \ar[d]_\cong^{(\beta^\vee[1])^{-1}} \\
  R^{-1} \ar[r]^{a_2^\ast} & R^0 \ar[r] & F_2
  }
\end{equation}
where $r^{-1}, r^0$ are also sheaf isomorphisms.  Taking the derived dual of \eqref{eq6} and shifting by 1, we obtain a commutative diagram in the derived category $D(\mathbb{P}^3)$
\begin{equation*}
\xymatrix{
  F_1^\vee [1] \ar[d]^\beta \ar[r] & d(R^\bullet_1) \ar[d]^{(r^\bullet)^\vee [1]} \\
  F_2^\vee [1] \ar[r] & d(R^\bullet_2)
  }
\end{equation*}
in which all arrows are isomorphisms in the derived category, and the horizontal maps are merely $e_i^\vee[1]$.  Taking the inverses of the horizontal maps, we obtain a commutative diagram
\begin{equation}\label{eq7}
\xymatrix{
d(R^\bullet_1) \ar[d]_{(r^\bullet)^\vee [1]}  \ar[r]^{(e_1^\vee[1])^{-1}} &  F_1^\vee [1] \ar[d]^\beta \\
d(R^\bullet_2)  \ar[r]^{(e_2^\vee[1])^{-1}} &  F_2^\vee [1]
  }
\end{equation}
in which all arrows are still isomorphisms in the derived category.  Concatenating  diagram \eqref{eqn-A2A3bigcommute} with  diagram \eqref{eq7}, we obtain  a commutative square
\begin{equation}
\xymatrix{
  d(R^\bullet_1) \ar[r]^{b_1} \ar[d]_{(r^\bullet)^\vee [1]} & Q_1 \ar[d]^\gamma_\cong \\
  d(R^\bullet_2) \ar[r]^{b_2} & Q_2
}
\end{equation}
in the derived category $D(\mathbb{P}^3)$.

The chain map ${(r^\bullet)^\vee [1]}$ induces the morphism $H^0(d(R_1^\bullet)) \to H^0(d(R_2^\bullet))$ between cohomology sheaves at degree 0.  Also, for each $i$, we have $\Hom_{D(\PP^3)}(d(R_i^\bullet),Q_i) \cong \Hom_{\Coh (\PP^3)} (H^0(d(R_i^\bullet)),Q_i)$, which means that $b_i$ factors in the derived category as
\[
d(R_i^\bullet) \overset{\phi_i}{\to} H^0(d(R_i^\bullet)) \overset{\psi_i}{\to} Q_i,
\]
where $\phi_i$ is the canonical map (which is a chain map) and $\psi_i = H^0(b_i)$ is a map in $\Coh (\PP^3)$.  We thus obtain a diagram
\begin{equation}
\xymatrix{
  d(R_1^\bullet) \ar[d]^{(r^\bullet)^\vee[1]} \ar[r]^{\phi_1} & H^0(d(R_1^\bullet)) \ar[r]^{\psi_1} \ar[d]^{H^0((r^\bullet)^\vee[1])} & Q_1 \ar[d]^\gamma \\
  d(R_2^\bullet) \ar[r]^{\phi_2} & H^0(d(R_2^\bullet)) \ar[r]^{\psi_2} & Q_2
}
\end{equation}
which is commutative in the category of chain complexes of coherent sheaves on $\PP^3$.  The outer edges of this commutative diagram then induces the following commutative diagram in  $\Coh (\PP^3)$
\[
\xymatrix{
  (R^0)^\ast \ar[r]^{a_1} \ar[d]^{(r^0)^\ast} & (R^{-1})^\ast \ar[r]^{\theta_1} \ar[d]^{(r^{-1})^\ast} & Q_1 \ar[d]^\gamma \\
  (R^0)^\ast \ar[r]^{a_2} & (R^{-1})^\ast \ar[r]^{\theta_2} & Q_2
}.
\]
Recall that the map $\theta_i$ is the same map as $b_i$ in \eqref{eqn-hinjtwodiagrams}.  Hence the two diagrams \eqref{eqn-hinjtwodiagrams} for $i=1,2$ are equivalent with respect to $\thicksim_3$, i.e.\ $\tilde{h}$ is injective.
\end{proof}

The following lemma will be the final step in showing that the closed points of certain strata of the moduli stack \eqref{eq1} have the structure of quotient stacks:
\begin{lemma}\label{_quotient_lemma_}
Let $X = \PP^3$.  For any $ch$ satisfying \eqref{eq4} and any $l$, the set
$$A_3^s(ch;l)/\thicksim_3$$
is in bijection with the set of closed points of a quotient stack.
\end{lemma}


\begin{proof}
By our definition of $A_3^s(ch;l)$, the maps $$a: (R^0)^\ast \ra (R^{-1})^\ast$$ that appear in elements in $A_3^s (ch;l)$ are parametrised by a quasi-projective scheme $V'$.  Since $(R^{-1})^\ast$ is a fixed sheaf as constructed in \eqref{eq5}, and any $Q$ that appears in an element of $A_3^s(ch;l)$ is a $0$-dimensional
quotient sheaf of $(R^{-1})^\ast$ with fixed length $l$, and hence of  fixed Hilbert polynomial,
 to each map $$b: (R^{-1})^\ast \ra Q$$ that appears in an element of $A_3^s(ch;l)$ we can associate a point of the Grothendieck quot scheme $Quot((R^{-1})^\ast,l).$  In other words, we have a set-theoretic map
 \[
   \phi : A_3^s(ch;l) \to V' \times Quot((R^{-1})^\ast,l)
 \]
 that sends the diagram $(R^0)^\ast \overset{a}{\to} (R^{-1})^\ast \overset{b}{\to} Q$ to $(a,b)$. Since  we require $ba=0$ for elements in $A_3^s(ch;l)$, the image of $\phi$ is a closed subscheme $W$ of $V' \times Quot((R^{-1})^\ast,l)$, which is a quasi-projective scheme.  The equivalence relation $\thicksim_3$ on $A_3^s(ch;l)$ induces an equivalence relation on $W$, which we also denote by $\thicksim_3$.

We claim that, two elements in $W$ are equivalent with respect to $\thicksim_3$ if and only if they differ by an action
of the group
\begin{eqnarray*}
G&=&\Aut(R^{-1})\times\Aut(R^0)\\
&=&\Aut((R^{-1})^\ast)\times\Aut((R^0)^\ast).
\end{eqnarray*}

First of all, we consider the action of a group element $$(\alpha,\beta)\in\Aut((R^{-1})^\ast)\times\Aut((R^0)^\ast)$$
on an element $$(R^0)^\ast \overset{a}{\to} (R^{-1})^\ast \overset{b}{\to} Q$$ of $A_3^s(ch;l)$.   It produces again
an element in $A_3^s(ch;l)$, as in the second row of the following diagram
\begin{equation*}
\xymatrix{
(R^0)^\ast   \ar[r]^a   \ar[d]^\alpha   &   (R^{-1})^\ast   \ar[r]^b   \ar[d]^\beta   &   Q   \ar[d]^=   \\
(R^0)^\ast   \ar[r]^{a'}   &   (R^{-1})^\ast   \ar[r]^{b'}   &   Q
}
\end{equation*}
where
\begin{eqnarray*}
a'&=&\beta  a \alpha^{-1};\\
b'&=&b  \beta^{-1}.
\end{eqnarray*}
It's clear that both squares commute, and all vertical arrows are isomorphisms, which are required in the
equivalence relation $\thicksim_3$. Therefore the group $G$ does act on $A_3^s(ch;l)$.  It is easy to see that $G$ induces a well-defined action on $W$ that makes $\phi : A_3^s(ch;l) \twoheadrightarrow W$ a $G$-equivariant map.

Now we want to show that if two elements in $A_3^s(ch;l)$ are equivalent under $\thicksim_3$, then up to an action of  the group $G$, they map under $\phi$ to the same element in $W$. Assume we have the following commutative diagram in $\Coh (\PP^3)$
\begin{equation*}
\xymatrix{
(R^0)^\ast   \ar[r]^{a_1}   \ar[d]^\alpha   &   (R^{-1})^\ast   \ar[r]^{b_1}   \ar[d]^\beta   &   Q   \ar[d]^\gamma   \\
(R^0)^\ast   \ar[r]^{a_2}   &   (R^{-1})^\ast   \ar[r]^{b_2}   &   Q
}
\end{equation*}
where all the vertical arrows are isomorphisms. We immediately have
\begin{eqnarray*}
\alpha &\in& \Aut((R^{0})^\ast);\\
\beta &\in& \Aut((R^{-1})^\ast).
\end{eqnarray*}

Note that we can factorise the above chain map in the following manner:
\begin{equation*}
\xymatrix{
(R^0)^\ast   \ar[r]^{a_1}   \ar[d]^=   &   (R^{-1})^\ast   \ar[r]^{b_1}   \ar[d]^=   &   Q   \ar[d]^\gamma   \\
(R^0)^\ast   \ar[r]^{a_1}   \ar[d]^\alpha   &   (R^{-1})^\ast   \ar[r]^{b_2 \circ \beta}   \ar[d]^\beta   &   Q   \ar[d]^=   \\
(R^0)^\ast   \ar[r]^{a_2}   &   (R^{-1})^\ast   \ar[r]^{b_2}   &   Q
}
\end{equation*}
In the above diagram, the first and the second rows are mapped by $\phi$ to  the same element in $W\subseteq (V'\times Quot((R^{-1})^\ast,l))$; this is because points $Q$ of the quot scheme $Quot((R^{-1})^\ast,l)$ are considered the same if they differ by an automorphism as quotients of $(R^{-1})^\ast$.   The second and the third rows obviously differ by an action of $(\alpha, \beta)\in G$.

Therefore, we have shown that two elements in $W$ are equivalent under
the relation $\thicksim_3$ if and only if one can be obtained from the other via the action of a group element in $G$.  It follows that $A_3^s(ch;l)/\thicksim_3$ is in bijection with the closed points of  a global quotient stack, namely
$$A_3^s(ch;l)/\thicksim_3= W/\thicksim_3=[W/G].$$
\end{proof}

Finally, we are ready to use Lemma \ref{_quotient_lemma_} to describe some of the strata in Lemma
\ref{_stratification_lemma_}.

\begin{theorem}\label{_quotient_coro_}
Let $X=\PP^3$. Then for any stratum of \eqref{_stratification_functor_} with $ch$ satisfying \eqref{eq4}, the set
of underlying closed points is in bijection with the closed points of a global quotient stack.
\end{theorem}

\begin{proof}
The set of underlying closed points in the stratum $\mathcal{A}_1^s(ch;l)$ is given by the set $A_1^s(ch;l)/\thicksim_1$.
When $ch$ satisfies \eqref{eq4}, we know from \eqref{eq19} and \eqref{eq10} that it is in bijection with $A_3^s(ch;l)/\thicksim_3$.
The result then follows from Lemma \ref{_quotient_lemma_}.
\end{proof}

\begin{remark}\label{remark2}
Theorem \ref{_quotient_coro_} identifies stable complexes with closed points of a quotient stack. It is a natural question to ask if the stratum of the moduli stack under consideration is isomorphic to the above quotient stack $[W/G]$. Unfortunately, it is \emph{not} the case. A more detailed analysis can be found in Appendix \ref{app}.
\end{remark}

\section{Monads and stable complexes on surfaces}\label{sec-P2}

 Recall that a monad on a variety $X$ is defined as a three-term complex of locally free sheaves
\begin{equation}\label{eqn-monaddef}
0 \to R^{-1} \to R^0 \to R^1 \to 0
\end{equation}
with $R^i$ at degree $i$, and which is exact everywhere except perhaps at degree 0.  The cohomology of the monad is defined as the cohomology of the complex at degree 0.  We have:

\begin{lemma}\label{lemma-FveeQchainmaprep}
Let $X$ be a smooth projective variety.  Let $Q$ be a 0-dimensional sheaf on $X$, and $F$ a coherent sheaf on $X$ that is the cohomology of a monad $0 \to R^{-1} \to R^0 \to R^1 \to 0$.  Then any morphism $t : F^\vee \to Q$ in the derived category $D(X)$ can be represented by a chain map of the form
\begin{equation}
  \xymatrix{
    (R^1)^\ast \ar[r]^\al & (R^0)^\ast \ar[r] \ar[d] & (R^{-1})^\ast  \\
     & Q. &
  }
\end{equation}
In other words, the vertical map factors through the canonical map $(R^0)^\ast \twoheadrightarrow \cokernel (\al)$.
\end{lemma}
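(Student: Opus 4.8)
The plan is to realise $F^\vee$ explicitly from the monad and then to show that every morphism $F^\vee\to Q$ factors through a single coherent sheaf, the relevant obstruction vanishing because $Q$ is $0$-dimensional. Write the monad as $R^{-1}\overset{u}{\to}R^0\overset{v}{\to}R^1$ with $R^i$ in degree $i$, and let $R^\bullet$ denote this complex. By definition of the monad's cohomology, $R^\bullet$ has cohomology concentrated in degree $0$, equal to $F$, so $R^\bullet\cong F$ in $D(X)$. Applying the derived dual $(-)^\vee$ and using that $R^\bullet$ is a bounded complex of locally free sheaves, we get an isomorphism in $D(X)$
\[
  F^\vee\;\cong\;C^\bullet:=\bigl[(R^1)^\ast\overset{\al}{\to}(R^0)^\ast\overset{\gamma}{\to}(R^{-1})^\ast\bigr],
\]
the terms in degrees $-1,0,1$, where $\al=v^\ast$ is the map in the statement and $\gamma=u^\ast$. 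Since the monad is exact at degree $1$, the map $v$ is surjective, so $\al=v^\ast$ is injective. Put $G:=\cokernel(\al)$. Because $\gamma\al=(vu)^\ast=0$, the canonical surjection $\pi:(R^0)^\ast\twoheadrightarrow G$ defines a surjective chain map $\psi:C^\bullet\to G$ (with $G$ placed in degree $0$).

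Next I compute $\ker\psi$ as a complex: it has terms $(R^1)^\ast$, $\image(\al)$, $(R^{-1})^\ast$ in degrees $-1,0,1$, with first differential $\al$ regarded as a surjection onto $\image(\al)$ and second differential $\gamma|_{\image(\al)}=0$. As $\al$ is injective this complex has cohomology only in degree $1$, equal to $(R^{-1})^\ast$, so $\ker\psi\cong(R^{-1})^\ast[-1]$ in $D(X)$. The short exact sequence of complexes $0\to\ker\psi\to C^\bullet\overset{\psi}{\to}G\to0$ therefore yields a distinguished triangle
\[
  (R^{-1})^\ast[-1]\;\longrightarrow\;C^\bullet\;\overset{\psi}{\longrightarrow}\;G\;\longrightarrow\;(R^{-1})^\ast .
\]

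Applying $\Hom_{D(X)}(-,Q)$ to this triangle gives an exact sequence
\[
  \Hom_{D(X)}(G,Q)\;\overset{\psi^\ast}{\longrightarrow}\;\Hom_{D(X)}(C^\bullet,Q)\;\longrightarrow\;\Hom_{D(X)}\bigl((R^{-1})^\ast[-1],Q\bigr).
\]
The last group is $\Ext^1_{D(X)}\bigl((R^{-1})^\ast,Q\bigr)\cong H^1\bigl(X,R^{-1}\otimes Q\bigr)$, since $(R^{-1})^\ast$ is locally free; and this vanishes because $R^{-1}\otimes Q$ is supported in dimension $0$. Hence $\psi^\ast$ is surjective. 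Given any $t:F^\vee\to Q$ in $D(X)$, identify it with a morphism $C^\bullet\to Q$ via the isomorphism above; then $t=s\circ\psi$ for some $s\in\Hom_{D(X)}(G,Q)=\Hom_{\Coh(X)}(G,Q)$, as $G$ and $Q$ are sheaves. The composite $s\circ\psi$ is an honest chain map $C^\bullet\to Q$ whose only nonzero component is, in degree $0$, the map $(R^0)^\ast\overset{\pi}{\to}G=\cokernel(\al)\overset{s}{\to}Q$; this is exactly a chain map of the asserted form, and its vertical arrow factors through the canonical surjection $(R^0)^\ast\twoheadrightarrow\cokernel(\al)$ by construction. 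The one step with any real content here is the vanishing $\Ext^1\bigl((R^{-1})^\ast,Q\bigr)=0$, which is precisely where the $0$-dimensionality of $Q$ enters; everything else is bookkeeping with the dual monad complex and with $\ker\psi$, where some care with the degree conventions in dualising is needed.
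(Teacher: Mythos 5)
Your proof is correct and follows essentially the same route as the paper: both realise $F^\vee$ as the dual three-term complex, fit it into a distinguished triangle whose outer piece is $(R^{-1})^\ast[-1]$, and use the vanishing $\Ext^1((R^{-1})^\ast,Q)\cong H^1(X,R^{-1}\otimes Q)=0$ for $0$-dimensional $Q$ to lift $t$ to a sheaf map out of $\cokernel(\al)$. The only cosmetic difference is that you present the complementary piece directly as the sheaf $\cokernel(\al)$ (using the injectivity of $\al$), whereas the paper keeps it as the two-term brutal truncation and then factors through its degree-one cohomology, which is again $\cokernel(\al)$.
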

\begin{proof}
In the derived category $D(X)$, if we consider $F$ as a one-term complex with $F$ at degree 0, then $F$ is  isomorphic to the complex $[R^{-1} \to R^0 \to R^1]$ with $R^0$ at degree 0.  So $F^\vee$ is isomorphic to $[(R^1)^\ast \overset{\al}{\to} (R^0)^\ast \overset{\beta}{\to} (R^{-1})^\ast]$ in the derived category.

Let $A^\bullet$ denote the 2-term complex $[(R^1)^\ast \overset{\al}{\to} (R^0)^\ast]$ with $(R^0)^\ast$ sitting at degree 1, and $B^\bullet$ be the 1-term complex with $(R^{-1})^\ast$ sitting at degree 1.  Then we have an exact triangle in $D(X)$
\[
  A^\bullet \to B^\bullet \to F^\vee \to A^\bullet [1].
\]
Applying $\Hom_{D(X)} (-,Q)$ to this exact triangle, we get an exact sequence
\[
 \Hom (A^\bullet [1], Q)\to \Hom (F^\vee,Q) \to \Hom (B^\bullet, Q),
\]
where $\Hom (B^\bullet, Q)=\Hom ((R^{-1})^\ast[-1], Q)=\Ext^1 ((R^{-1})^\ast,Q)\cong H^1 (X, (R^{-1}) \otimes Q)$ (recall that $R^{-1}$ is locally free), which is zero  since $Q$ is 0-dimensional.  Hence any morphism $t : F^\vee \to Q$ in the derived category is induced by a map $\bar{t} : A^\bullet [1] \to Q$ in the derived category.  However, $\Hom_{D(X)} (A^\bullet [1], Q) \cong \Hom_{\Coh (X)} (H^1 (A^\bullet),Q) = \Hom_{\Coh (X)} (\cokernel (\al),Q)$, i.e.\ $\bar{t}$  in turn factors through the canonical map $(R^0)^\ast \twoheadrightarrow \cokernel (\al)$.  The claim then follows.
\end{proof}

Following the terminology in \cite{Jardim-inst}, by a linear monad we  mean a monad on $\PP^n$  of the form
\[
  M^\bullet := [0 \to \OO_{\PP^n}(-1)^{\oplus a} \overset{\al}{\to} \OO_{\PP^n}^{\oplus b} \overset{\beta}{\to} \OO_{\PP^n}(1)^{\oplus c} \to 0].
\]
We say a coherent sheaf on $\PP^n$ is a linear sheaf if it is the cohomology of a linear monad.  Also, recall that a coherent sheaf $F$ is said to be normalised if $$-\rank (F) +1 \leq c_1 (F) \leq 0.$$  Note that for a normalised sheaf, we have $-1 < \mu (F)  \leq 0$.

Jardim has the following criterion for a sheaf on $\mathbb{P}^n$ to be a linear sheaf:

\begin{theorem}\cite[Theorem 3]{Jardim-inst}\label{thm-Jardim-inst-thm3}
If $F$ is a torsion-free sheaf on $\PP^n$ satisfying:
\begin{enumerate}
\item[(i)] for $n \geq 2$, $H^0(F(-1))=H^n(F(-n))=0$;
\item[(ii)] for $n \geq 3$, $H^1 (F(-2)) = H^{n-1} (F(1-n))=0$;
\item[(iii)] for $n \geq 4, 2 \leq p \leq n-2$ and all $k$, $H^p (F(k))=0$;
\end{enumerate}
then $F$ is a linear sheaf, and can be represented as the cohomology of the monad
\begin{equation}\label{eqn-monad1}
  0 \to H^1 (F \otimes \Omega_{\PP^n}^2 (1)) \otimes \OO_{\PP^n}(-1) \to H^1 (F \otimes \Omega_{\PP^n}^1) \otimes \OO_{\PP^n} \to H^1 (F (-1)) \otimes \OO_{\PP^n}(1) \to 0.
\end{equation}
\end{theorem}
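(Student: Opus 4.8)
The plan is to produce the monad \eqref{eqn-monad1} directly from the Beilinson spectral sequence, using the vanishing hypotheses (i)--(iii) to force it to collapse onto a single row. Applied to the twisted sheaf $F(-1)$, the (second form of the) Beilinson spectral sequence has $E_1$-page
\begin{equation*}
E_1^{p,q} = H^q\bigl(\PP^n,\, F(-1) \otimes \Omega_{\PP^n}^{-p}(-p)\bigr) \otimes \OO_{\PP^n}(p), \qquad -n \le p \le 0,\quad 0 \le q \le n,
\end{equation*}
with $d_1$ horizontal, and it converges to $F(-1)$ placed in cohomological degree $0$; in particular $E_\infty^{p,q}=0$ whenever $p+q\ne 0$, while $\bigoplus_p E_\infty^{-p,p}$ is the associated graded of $F(-1)$ for a finite filtration. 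A standard feature of this spectral sequence is that $d_1$ is given by matrices of linear forms, so any complex read off from a single row of the $E_1$-page is automatically a linear complex of sheaves.

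The core of the proof is the translation of (i)--(iii) into vanishing of $E_1$-terms. To handle $E_1^{p,q}$ with $j:=-p$, one resolves $\Omega_{\PP^n}^{j}(j)$ by a complex whose terms are sums of $\OO_{\PP^n}(i)$, $0\le i\le j$, built from exterior powers of the Euler sequence, and chases the resulting hypercohomology long exact sequences; this expresses $H^q(F(-1)\otimes\Omega_{\PP^n}^{j}(j))$ through the groups $H^m(F(i-1))$ with $0\le i\le j$. Hypothesis (iii) then annihilates every row with $2\le q\le n-2$. For the rows $q=0$ and $q=n$, I would use that for a torsion-free sheaf multiplication by a general linear form is injective, whence $H^0(F(k))\hookrightarrow H^0(F(k+1))$ and, restricting to a general hyperplane, $H^n(F(k))\twoheadrightarrow H^n(F(k+1))$; together with the Koszul chase and hypothesis (i) this kills rows $q=0$ and $q=n$ completely. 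Finally, hypothesis (ii) together with the same monotonicity kills rows $q=1$ and $q=n-1$ everywhere except for the three entries of row $q=1$ at $p=0,-1,-2$, which are $E_1^{-2,1}=H^1(F\otimes\Omega_{\PP^n}^2(1))\otimes\OO_{\PP^n}(-2)$, $E_1^{-1,1}=H^1(F\otimes\Omega_{\PP^n}^1)\otimes\OO_{\PP^n}(-1)$, and $E_1^{0,1}=H^1(F(-1))\otimes\OO_{\PP^n}$. (When $n=2$ or $3$ the hypotheses (ii), (iii) are vacuous, the rows in question are shorter, and any entry off the antidiagonal that is left over admits no nonzero differential on any page, so it must vanish by convergence.)

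Once the $E_1$-page is concentrated in row $q=1$ and columns $p\in\{0,-1,-2\}$, all higher differentials vanish for degree reasons, so $E_2=E_\infty$ and $E_2^{p,1}$ is the cohomology at position $p$ of the three-term complex $E_1^{-2,1}\to E_1^{-1,1}\to E_1^{0,1}$. Since $E_\infty^{p,q}$ vanishes off the antidiagonal while $\bigoplus_p E_\infty^{-p,p}\cong F(-1)$, this complex is exact at $p=-2$ and $p=0$ and has cohomology $F(-1)$ at $p=-1$. Thus $F(-1)$ is the cohomology of the monad $[\,H^1(F\otimes\Omega_{\PP^n}^2(1))\otimes\OO_{\PP^n}(-2)\to H^1(F\otimes\Omega_{\PP^n}^1)\otimes\OO_{\PP^n}(-1)\to H^1(F(-1))\otimes\OO_{\PP^n}\,]$; twisting by $\OO_{\PP^n}(1)$ produces exactly the linear monad \eqref{eqn-monad1} with cohomology $F$, so $F$ is a linear sheaf.

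The main obstacle is the cohomological bookkeeping --- identifying precisely which $E_1$-terms survive. The rows $q=1$ and $q=n-1$ are the delicate ones, since $H^1$ and $H^{n-1}$ do not have the simple monotonicity in twists that $H^0$ and $H^n$ enjoy, so one has to follow the Koszul resolutions of the $\Omega_{\PP^n}^j(j)$ carefully, and in low dimensions lean on the convergence of the spectral sequence itself to dispose of the remaining off-antidiagonal entries.
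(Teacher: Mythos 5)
This statement is imported into the paper as \cite[Theorem 3]{Jardim-inst}; the paper gives no proof of its own, so the only meaningful comparison is with Jardim's original argument. Your overall architecture is exactly his: apply the (second) Beilinson spectral sequence to $F(-1)$, use (i)--(iii) to kill every $E_1$-term except $E_1^{0,1}$, $E_1^{-1,1}$, $E_1^{-2,1}$, and read off the linear monad from the surviving row together with the convergence to $F(-1)$ on the antidiagonal. That frame, the identification of the three surviving terms, and the degeneration argument at the end are all correct.

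The gap is in the one step you yourself flag as the core, namely the translation of (i)--(iii) into $E_1$-vanishing, and it is not merely bookkeeping: you have chosen the wrong half of the Koszul complex. Resolving $\Omega^j_{\PP^n}(j)$ by sums of $\OO_{\PP^n}(i)$ with $0\le i\le j$ expresses $H^q(F(-1)\otimes\Omega^j(j))$ through groups $H^m(F(i-1))$ with $i\ge 0$, i.e.\ through cohomology of twists $F(k)$ with $k\ge -1$. But the hypotheses control twists $k\le -1$: condition (ii) concerns $F(-2)$ and $F(1-n)$, which simply never occur in your chase, so your mechanism cannot invoke (ii) at all; and the chase leaves behind uncontrolled groups such as $H^1(F(0))$ and $H^0(F(1))$ in rows $2\le q\le n-2$ and $H^{n-1}(F(0))$ in row $q=n$ (monotonicity of $H^0$ and $H^n$ in the twist does not help with these). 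The repair is to use instead the exact sequences $0\to\Omega^{j+1}(j)\to\Lambda^{j+1}V\otimes\OO(-1)\to\Omega^j(j)\to 0$ obtained from the Euler sequence (equivalently, the resolution of $\Omega^j(j)\cong\Lambda^{n-j}T_{\PP^n}(j-n-1)$ by negative twists of $\OO$): iterating these expresses $H^q(F(-1)\otimes\Omega^j(j))$ through the groups $H^{q+m}(F(-2-m))$, $m\ge 0$, which is exactly the list that (i)--(iii) annihilate. With this chase one checks that every entry dies except the three at $(p,q)\in\{(0,1),(-1,1),(-2,1)\}$ --- and, just as importantly, that the chase for $E_1^{-2,1}$ terminates at $H^{n-1}(F(-n))$, which is \emph{not} among the hypotheses, so the monad terms are not accidentally killed. (Two smaller points: for $q=0$ your resolution does work, since $H^0(F(-1)\otimes\Omega^j(j))$ injects into $H^0(F(-1))^{\oplus\binom{n+1}{j}}$; and for $n=3$ hypothesis (ii) is not vacuous, only (iii) is.)
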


We have the following easy observation:

\begin{lemma}\label{lemma-stnorislin}
  Let $F$ be a normalised $\mu$-semistable torsion-free sheaf on $\PP^n$.  Then $H^0(F(-1))=H^n(F(-n))=0$ (i.e.\ condition (i) of Theorem \ref{thm-Jardim-inst-thm3} is satisfied).
\end{lemma}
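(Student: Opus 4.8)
The plan is to handle the two vanishings separately, translating each into the (non-)existence of a morphism between sheaves and then deriving a contradiction from $\mu$-semistability together with the slope bounds $-1 < \mu(F) \leq 0$ that hold because $F$ is normalised (as remarked just before the lemma). Throughout I would use the standard equivalence between the subsheaf and quotient formulations of $\mu$-semistability for torsion-free sheaves.

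For $H^0(F(-1)) = 0$: since $H^0(\PP^n, F(-1)) \cong \Hom_{\Coh(\PP^n)}(\OO_{\PP^n}(1), F)$, a nonzero class would give a nonzero morphism $\varphi : \OO_{\PP^n}(1) \to F$. First I would observe that $\varphi$ is automatically injective, because its kernel is a subsheaf of the line bundle $\OO_{\PP^n}(1)$, so it is either $0$ or of rank one; in the latter case $\image(\varphi)$ would be a torsion subsheaf of the torsion-free sheaf $F$, hence zero, contradicting $\varphi \neq 0$. Thus $\OO_{\PP^n}(1)$ embeds into $F$ as a subsheaf with $\mu(\OO_{\PP^n}(1)) = 1 > 0 \geq \mu(F)$, contradicting $\mu$-semistability.

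For $H^n(F(-n)) = 0$: by Serre duality on $\PP^n$ in the form $H^n(\PP^n,\mathcal{G})^\ast \cong \Hom_{\PP^n}(\mathcal{G},\omega_{\PP^n})$, which is valid for an arbitrary coherent sheaf $\mathcal{G}$, we get $H^n(F(-n))^\ast \cong \Hom(F(-n),\OO_{\PP^n}(-n-1)) \cong \Hom(F,\OO_{\PP^n}(-1))$. Suppose $\psi : F \to \OO_{\PP^n}(-1)$ is nonzero and put $G = \image(\psi)$: then $G$ is a nonzero subsheaf of the line bundle $\OO_{\PP^n}(-1)$, hence torsion-free of rank one, and since $\OO_{\PP^n}(-1)/G$ is torsion its first Chern class is effective, so $c_1(G) \leq c_1(\OO_{\PP^n}(-1))$ and therefore $\mu(G) \leq -1$. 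On the other hand $G$ is a nonzero torsion-free quotient of $F$: if $\rank G < \rank F$, $\mu$-semistability gives $\mu(G) \geq \mu(F)$; if $\rank G = \rank F$ (so $\rank F = 1$), the kernel of $F \twoheadrightarrow G$ is torsion, hence zero, so $F \cong G$ and $\mu(F) = \mu(G)$. In either case we obtain $\mu(F) \leq -1$, contradicting $\mu(F) > -1$.

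The argument is essentially formal; the only point needing care is the second vanishing — applying Serre duality in the $\Hom$/$\Ext$ form valid without local freeness, and correctly bounding the slope of the rank-one image inside $\OO_{\PP^n}(-1)$ via the torsion cokernel — so I expect this to be the main (and quite mild) obstacle.
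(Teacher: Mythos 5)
Your proof is correct and follows essentially the same route as the paper: identify $H^0(F(-1))$ with a Hom group, use Serre duality to turn $H^n(F(-n))$ into $\Hom(F,\OO_{\PP^n}(-1))$, and kill both by comparing slopes using $-1<\mu(F)\leq 0$. The only cosmetic difference is that the paper invokes the standard vanishing of homomorphisms between $\mu$-semistable sheaves of decreasing slope, whereas you re-derive the two needed instances directly from the subsheaf/quotient formulations; both are fine.
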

\begin{proof}
Note that $H^0(F(-1)) \cong \Hom (\OO_{\PP^n},F(-1))$.  Since $\OO_{\PP^n}, F(-1)$ are both $\mu$-semistable and $\mu (\OO_{\PP^n})=0 > \mu (F(-1))$, we have $\Hom (\OO_{\PP^n},F(-1)) = H^0(F(-1))=0$.  On the other hand,
\begin{align*}
  H^n (F(-n)) &\cong \Ext^n (\OO_{\PP^n}, F(-n)) \\
  &\cong \Ext^0 (F(-n), \omega_{\PP^n})  \text{ by Serre duality}\\
  &\cong \Hom (F,\OO (-1)) \text{ since $\omega_{\PP^n} \cong \OO (-n-1)$}.
  \end{align*}
Now, since $F, \OO (-1)$ are both $\mu$-semistable and $\mu (F) > -1 = \mu (\OO (-1))$, we obtain the vanishing of $\Hom (F,\OO (-1))$, and hence of $H^n (F(-n))$.
\end{proof}
On $\mathbb{P}^2$, we can be a bit more specific:
\begin{lemma}\label{lemma-norstonP2hasmonad}
Let $F$ be a normalised $\mu$-semistable torsion-free sheaf on $\PP^2$.  Then $F$ is linear and is the cohomology of a monad of the form
\begin{equation}\label{eqn-P2norstmonad}
0\to \OO_{\PP^2}(-1)^{\oplus d+c} \to \OO_{\PP^2}^{\oplus r+d+2c} \to \OO_{\PP^2}(1)^{\oplus c} \to 0
\end{equation}
where $c = -\chi (F(-1)), r=\rank (F)$ and $d=\degree (F)$.
\end{lemma}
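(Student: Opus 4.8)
The plan is to apply Jardim's criterion (Theorem~\ref{thm-Jardim-inst-thm3}) to produce the monad \eqref{eqn-monad1}, and then identify its three terms by Riemann--Roch.  First I would observe that a normalised $\mu$-semistable torsion-free sheaf $F$ on $\PP^2$ satisfies condition (i) of Theorem~\ref{thm-Jardim-inst-thm3} by Lemma~\ref{lemma-stnorislin}, namely $H^0(F(-1)) = H^2(F(-2)) = 0$; since $n=2$, conditions (ii) and (iii) are vacuous.  Hence $F$ is linear and is the cohomology of
\[
0 \to H^1(F\otimes\Omega_{\PP^2}^2(1))\otimes\OO_{\PP^2}(-1) \to H^1(F\otimes\Omega_{\PP^2}^1)\otimes\OO_{\PP^2} \to H^1(F(-1))\otimes\OO_{\PP^2}(1) \to 0.
\]
Using $\Omega_{\PP^2}^2 \cong \omega_{\PP^2} \cong \OO_{\PP^2}(-3)$, the leftmost coefficient group rewrites as $H^1(F(-2))$, so it remains to check that $h^1(F(-1)) = c$, $h^1(F(-2)) = d+c$, and $h^1(F\otimes\Omega_{\PP^2}^1) = r+d+2c$.

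For the two outer terms I would combine Serre duality, a slope vanishing, and Riemann--Roch.  By Serre duality $H^2(F(-1)) \cong \Hom(F,\OO_{\PP^2}(-2))^\ast$, which vanishes because $F$ and $\OO_{\PP^2}(-2)$ are $\mu$-semistable with $\mu(F) > -1 > -2$; together with $H^0(F(-1)) = 0$ this gives $h^1(F(-1)) = -\chi(F(-1)) = c$ by the definition of $c$.  Likewise $H^0(F(-2)) = \Hom(\OO_{\PP^2}(2),F) = 0$ since $2 > 0 \geq \mu(F)$, and $H^2(F(-2)) = 0$ by Lemma~\ref{lemma-stnorislin}, so $h^1(F(-2)) = -\chi(F(-2))$.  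Writing $ch(F) = (r,d,e)$ and using $td(\PP^2) = (1,\tfrac32 H, H^2)$ from Lemma~\ref{_lemma_0_}, a short computation gives $\chi(F(-1)) = \tfrac{d}{2}+e$ and $\chi(F(-2)) = -\tfrac{d}{2}+e$; the first of these together with $c = -\chi(F(-1))$ forces $e = -c-\tfrac{d}{2}$, whence $h^1(F(-2)) = \tfrac{d}{2}-e = d+c$.

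The middle term then comes for free from a rank count in the monad: since $R^{-1}\to R^0$ is injective and $R^0\to R^1$ is surjective with degree-$0$ cohomology $F$, we get $\rank R^0 = \rank F + \rank R^{-1} + \rank R^1 = r + (d+c) + c = r+d+2c$.  Plugging the three ranks into \eqref{eqn-monad1} produces the monad \eqref{eqn-P2norstmonad}.  I do not expect a genuine obstacle here: the argument is essentially bookkeeping, and the only points demanding a little care are the identification $\Omega_{\PP^2}^2(1) \cong \OO_{\PP^2}(-2)$ and checking, in each Euler-characteristic computation, that the two unwanted cohomology groups vanish by a slope inequality so that the remaining $h^1$ is pinned down.
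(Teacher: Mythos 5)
Your proposal is correct and follows essentially the same route as the paper: verify condition (i) of Theorem \ref{thm-Jardim-inst-thm3} via Lemma \ref{lemma-stnorislin} (conditions (ii), (iii) being vacuous for $n=2$), and then identify the ranks of the three monad terms. The only difference is bookkeeping: the paper computes $h^1(F(-1))=c$ directly and then reads off the other two ranks from the relations $d=v-u$ and $r=w-v-u$ forced by the monad, whereas you compute $h^1(F(-2))=d+c$ by a second Riemann--Roch calculation (with the extra vanishings $H^0(F(-2))=H^2(F(-2))=0$, which you justify correctly) -- both are valid.
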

\begin{proof}
By Lemma \ref{lemma-stnorislin}, we have $H^0(F(-1))=0$.  From the short exact sequence $0 \to F(k-1) \to F(k) \to F(k) |_H \to 0$ where $H \subset \PP^2$ is any hyperplane (i.e.\ a line in our case of $\PP^2$), and that $H^2(F(-2))=0$ (also by Lemma \ref{lemma-stnorislin}), we obtain $H^2 (F(-1))=0$.  Hence $\dimension H^1 (F(-1)) = - \chi (F(-1))$, which we denote by $c$.

In the monad \eqref{eqn-monad1} for $F$, let $v = \dimension H^1 (F \otimes \Omega_{\PP^2}^2 (1))$, $w=\dimension H^1 (F \otimes \Omega_{\PP^2})$, and $u = \dimension H^1(F(-1))$.  Then
\begin{align*}
  d &:= \degree (F) \\
  &= -\degree (H^1 (F \otimes \Omega_{\PP^n}^2 (1)) \otimes \OO_{\PP^n}(-1) ) - \degree (H^1 (F (-1)) \otimes \OO_{\PP^n}(1)) \\
  &= v-u.
\end{align*}
Similarly, $r:= \rank (F) = w-v-u$.  Hence $u=c, v=d+c$ and $w=r+v+u=r+d+2c$, giving us the lemma.
\end{proof}

Following the definition in \cite{Jardim-inst},  we say that a torsion-free sheaf $F$ on $\PP^n$ (where $n \geq 2$) is an instanton sheaf if $c_1(F)=0$ and conditions (i) through (iii) in Theorem \ref{thm-Jardim-inst-thm3} are satisfied.  We call $-\chi (F(-1))$ the charge of $F$.

\begin{remark}
In the case of $c_1(F)=0$, Lemma \ref{lemma-norstonP2hasmonad} becomes \cite[Theorem 17]{Jardim-inst}, which says that any $\mu$-semistable torsion-free sheaf on $\mathbb{P}^2$ with $c_1=0$ is instanton.
\end{remark}

Let us consider complexes $E \in D(\PP^2)$ of the following form:
\begin{itemize}
\item $H^{-1}(E)$ is a $\mu$-semistable torsion-free sheaf;
\item $H^0(E)$ is a 0-dimensional sheaf;
\item $H^i(E)=0$ for $i \neq -1, 0$.
\end{itemize}
Examples of such complexes include stable pairs (i.e.\ sections  $\OO_{\PP^2} \overset{s}{\to} F$ of sheaves $F$ where $\cokernel (s)$ is 0-dimensional) and polynomial-stable or Bridgeland-stable complexes.  Any such complex is determined by its cohomology sheaves $H^{-1}(E), H^0(E)$ and a class $\al \in \Ext^2 (H^0(E),H^{-1}(E))$.  By the isomorphisms in \eqref{eqn-transform}, we know $\al$ corresponds to a morphism $t : H^{-1}(E)^\vee \to \EExt^2 (H^0(E),\OO_{\PP^2})$ in $D(\PP^2)$.  This prompts us to define the set
\begin{align*}
  B_1^s (r,d,c;l) = \{ \text{morphisms } & F^\vee \overset{t}{\to} Q \text{ in } D(\PP^2) :  \\
    & F \text{ is a }\mu\text{-semistable}, \text{ normalised torsion-free sheaf with} \\
    &\rank (F)=r, \degree (F)=d, -\chi (F(-1))=c, \text{ and}\\
    &Q \text{ is a 0-dimensional sheaf of length }l \}.
\end{align*}
On the set $B_1^s (r,d,c;l)$, we can define an equivalence relation $\thicksim_{B_1}$ in the same manner as for $\thicksim_2$ on threefolds (see diagram \eqref{eq11}).  On the other hand, let us also define $B_2(r,d,c;l)$ to be the set of all diagrams in $\Coh (\PP^2)$ of the form
\begin{equation}\label{eq14}
  \xymatrix{
    \OO_{\PP^2}(-1)^{\oplus c} \ar[r]^\al & \OO_{\PP^2}^{\oplus r+d+2c} \ar[r]^\beta \ar[d]^\gamma & \OO_{\PP^2}(1)^{\oplus d+c} \\
    & Q &
  }
\end{equation}
such that $\al$ is injective, $\beta \al=0$ (so that the three terms in a row form a complex) and $\gamma \al=0$.

On the set $B_2(r,d,c;l)$, we define an equivalence relation $\thicksim_{B_2}$ where two such diagrams are declared equivalent if there is a commutative diagram in $\Coh (\PP^2)$ of the form
\[
\xymatrix{
\OO_{\PP^2}(-1)^{\oplus c} \ar[r]^\al \ar@/^/[drr] & \OO_{\PP^2}^{\oplus r+d+2c} \ar[d]^(.65)\gamma \ar[r]^\beta \ar@/^/[drr] & \OO_{\PP^2}(1)^{\oplus d+c} \ar@/^/[drr] & & \\
& Q \ar@/^/[drr] & \OO_{\PP^2}(-1)^{\oplus c} \ar[r]^{\al'}  & \OO_{\PP^2}^{\oplus r+d+2c} \ar[d]^(.65){\gamma'} \ar[r]^{\beta'}  & \OO_{\PP^2}(1)^{\oplus d+c} \\
& & & Q &
}
\]
in which all diagonal arrows are isomorphisms.

%

For fixed $r,d,c,l$, we can define a map
\begin{equation}\label{eq13}
  f : B_1^s (r,d,c;l) \to B_2 (r,d,c;l)
\end{equation}
in the following obvious manner: given any morphism $t : F^\vee \to Q$ in $B_1^s(r,d,c;l)$, Lemma \ref{lemma-norstonP2hasmonad} says we can find a monad of the form
\begin{equation}\label{eq12}
0\to \OO_{\PP^2}(-1)^{\oplus d+c} \to \OO_{\PP^2}^{\oplus r+d+2c} \to \OO_{\PP^2}(1)^{\oplus c} \to 0
\end{equation}
that is isomorphic to $F$ in the derived category $D(\PP^2)$.  Then $F^\vee$ is isomorphic, in the derived category, to the dual complex of \eqref{eq12}, namely
\begin{equation}\label{eq13}
\OO_{\PP^2}(-1)^{\oplus c} \overset{\al}{\to} \OO_{\PP^2}^{\oplus r+d+2c} \overset{\beta}{\to} \OO_{\PP^2}(1)^{\oplus d+c}.
\end{equation}
Note that, since $\beta$ may not be surjective, \eqref{eq13} may not be a monad.


Now, with the same argument as in the proof of Lemma \ref{lemma-FveeQchainmaprep}, we can represent the morphism $t$  by the chain map
\begin{equation*}
  \xymatrix{
    \OO_{\PP^2}(-1)^{\oplus c} \ar[r]^\al & \OO_{\PP^2}^{\oplus r+d+2c} \ar[r]^\beta \ar[d]^\gamma & \OO_{\PP^2}(1)^{\oplus d+c} \\
    & Q &
  }
\end{equation*}
where $\gamma$ is the composition of the canonical map  $\OO_{\PP^2}^{\oplus r+d+2c} \twoheadrightarrow \cokernel (\al)$ followed by some $\gamma_2$.  We have thus constructed a map $f$ from $B_1^s (r,d,c;l)$ to $B_2 (r,d,c;l)$.

Note that, in the construction of $f$ above, we had to choose a lifting $\gamma$ of $t$, and the choice is not necessarily unique.  This makes it difficult to see whether the map $f$ induces a map on the sets of equivalences classes $B_1^s(r,d,c;l)/\thicksim_{B_1}$ and $B_2 (r,d,c;l)/\thicksim_{B_2}$.  We can sidestep this problem if we add the assumption $c+d=0$:

\begin{lemma}\label{lemma7}
When $c+d=0$, there is a 1-1 correspondence between:
\begin{itemize}
\item[(a)] the isomorphism classes of normalised $\mu$-semistable torsion-free sheaf $F$ of rank $r$, degree $d$ and $-\chi (F(-1))=c$, and
\item[(b)] the closed points of an open subscheme  $V'$ of  $V$, where $V$ is the scheme parametrising isomorphism classes of  cokernels of injective morphisms  \[
    \OO_{\PP^2}(-1)^{\oplus c} \to \OO_{\PP^2}^{\oplus r+c},
    \]
    up to the actions of $\Aut (\OO_{\PP^2}(-1)^{\oplus c})$ and $\Aut (\OO_{\PP^2}^{\oplus r+c})$.
\end{itemize}
\end{lemma}

\begin{proof}
When $c+d=0$, Lemma \ref{lemma-norstonP2hasmonad} says that every sheaf $F$ of the above form is locally free,  and is the kernel of a surjection $\OO_{\PP^2}^{\oplus r+c} \to \OO_{\PP^2}(1)^{\oplus c}$.  Hence the (non-derived) dual $F^\ast$ of $F$ has a 2-term locally free resolution of the form
\[
  0 \to \OO_{\PP^2}(-1)^{\oplus c} \to \OO_{\PP^2}^{\oplus r+c} \to F^\ast \to 0.
\]
The association $F \mapsto (\OO_{\PP^2}(-1)^{\oplus c} \to \OO_{\PP^2}^{\oplus r+c})$ gives a surjective set-theoretic map from the set in (a) to the set of closed points of an  open subscheme $V'$ of $V$.  That this map is an injection is clear.
\end{proof}

Let us write
\[
 B_2^s (r,d,c;l) := \{ \text{diagrams of the form \eqref{eq14} that are in $B_2(r,d,c;l)$} : \cokernel(\al) \in V' \}.
\]

Now, similar to the case of $\PP^3$, we have:

\begin{lemma}\label{lemma8}
For fixed $r,d,c,l$ satisfying $c+d=0$, we have a bijection
\[
B_1^s(r,d,c;l)/\thicksim_{B_1} \leftrightarrow B_2^s (r,d,c;l)/\thicksim_{B_2}.
\]
\end{lemma}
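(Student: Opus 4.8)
The plan is to imitate the proof of Lemma~\ref{lemma6}, the difference being that the hypothesis $c+d=0$ removes the obstruction pointed out just before Lemma~\ref{lemma7} (that the lift $\gamma$ of a morphism need not be unique). First I would record what $c+d=0$ buys us: by Lemma~\ref{lemma-norstonP2hasmonad} every sheaf $F$ appearing in $B_1^s(r,d,c;l)$ is locally free, so its derived dual $F^\vee$ is concentrated in degree $0$ and equals $F^\ast$; the monad~\eqref{eq12} degenerates to a two-term resolution $0\to\OO_{\PP^2}(-1)^{\oplus c}\overset{\al}{\to}\OO_{\PP^2}^{\oplus r+c}\to F^\ast\to 0$ whose isomorphism classes are, by Lemma~\ref{lemma7}, parametrised by $V'$; and since the column $\OO_{\PP^2}(1)^{\oplus d+c}$ in~\eqref{eq14} vanishes, an element of $B_2^s(r,d,c;l)$ reduces to a map $\al\in V'$ together with a map $\gamma:\OO_{\PP^2}^{\oplus r+c}\to Q$ with $\gamma\al=0$, that is, with $\gamma$ factoring through the canonical surjection $\OO_{\PP^2}^{\oplus r+c}\twoheadrightarrow\cokernel(\al)$.

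I would then define $h:B_2^s(r,d,c;l)\to B_1^s(r,d,c;l)$ by sending $(\al,\gamma)$ to the composite $F^\vee\overset{\cong}{\to}[\OO_{\PP^2}(-1)^{\oplus c}\overset{\al}{\to}\OO_{\PP^2}^{\oplus r+c}]\overset{\gamma}{\to}Q$ in $D(\PP^2)$, where the bracketed complex sits in degrees $-1,0$ and $F$ is the locally free sheaf, unique up to isomorphism by Lemma~\ref{lemma7}, with $F^\ast\cong\cokernel(\al)$. A $\thicksim_{B_2}$-equivalence is by definition a commuting square of isomorphisms on the two bundles and on $Q$, so composing it with $h$ produces a $\thicksim_{B_1}$-equivalence of the images; hence $h$ descends to $\tilde h:B_2^s(r,d,c;l)/\thicksim_{B_2}\to B_1^s(r,d,c;l)/\thicksim_{B_1}$. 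Surjectivity of $\tilde h$ is then immediate: given $t:F^\vee\to Q$ in $B_1^s(r,d,c;l)$, resolve $F^\ast$ as above with $\al\in V'$; since $\Hom_{D(\PP^2)}(F^\vee,Q)=\Hom_{\Coh(\PP^2)}(F^\ast,Q)$ and $\OO_{\PP^2}^{\oplus r+c}\twoheadrightarrow F^\ast$ is an epimorphism, $t$ lifts to a unique $\gamma$ with $\gamma\al=0$, and $h$ carries $(\al,\gamma)$ back to a representative of $t$.

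The substantive step, exactly as in Lemma~\ref{lemma6}, is the injectivity of $\tilde h$. Given two diagrams $(\al_1,\gamma_1),(\al_2,\gamma_2)$ whose $h$-images $t_i:F_i^\vee\to Q_i$ are $\thicksim_{B_1}$-equivalent, one gets $F_1^\vee\cong F_2^\vee$ (hence $F_1\cong F_2$) and $Q_1\cong Q_2$ fitting in a commuting square with the $t_i$. Since $V'$ parametrises the isomorphism classes of these locally free sheaves (Lemma~\ref{lemma7}, ultimately because~\eqref{eq12} is assembled from canonical cohomology groups), the isomorphism $F_1^\ast\cong F_2^\ast$ lifts to a chain isomorphism of the two-term resolutions, i.e.\ to compatible automorphisms of $\OO_{\PP^2}(-1)^{\oplus c}$ and $\OO_{\PP^2}^{\oplus r+c}$ intertwining $\al_1$ and $\al_2$. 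Concatenating this chain isomorphism with the $\thicksim_{B_1}$-square and using that each $\gamma_i$ factors through $\cokernel(\al_i)$, I would obtain, just as in diagrams~\eqref{eq6} and~\eqref{eq7} in the proof of Lemma~\ref{lemma6}, a diagram of chain complexes whose outer edge is precisely a $\thicksim_{B_2}$-equivalence between $(\al_1,\gamma_1)$ and $(\al_2,\gamma_2)$.

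The main obstacle is this lifting-and-chase argument: one must verify that the isomorphism of cokernels genuinely lifts to an isomorphism of the \emph{fixed} two-term resolutions — this is where the rigidity of the bundle terms and Lemma~\ref{lemma7} (the $\PP^2$-analogue of \cite[Proposition~2.9]{MR}) are indispensable — and then keep track of all the compatibilities through $D(\PP^2)$. The remaining manipulations are routine, and in fact strictly simpler than on $\PP^3$ because the third bundle term has disappeared. The numerical invariants $(r,d,c,l)$ are preserved by $h$ by construction, so putting everything together yields the claimed bijection.
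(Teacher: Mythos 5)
Your proposal is correct and follows essentially the same route as the paper, which simply observes that Lemma \ref{lemma7} lets the proof of Lemma \ref{lemma6} carry over with the map $h$ defined as in \eqref{eq16}; your extra observations (that $c+d=0$ makes $F$ locally free so $F^\vee=F^\ast$, kills the third bundle term, and makes the lift $\gamma$ of $t$ canonical) are exactly the simplifications the paper is implicitly relying on. No gap.
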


\begin{proof}
Using Lemma \ref{lemma7}, the proof of Lemma \ref{lemma6} works in our case with almost no change:  we can define a map $h : B_2^s (r,d,c;l) \to B_1^s(r,d,c;l)$ in the same manner as in \eqref{eq16}, so that a diagram
\[
  \OO_{\PP^2}(-1)^{\oplus c} \overset{\al}{\to} \OO_{\PP^2} ^{\oplus r + c} \overset{\gamma}{\to} Q
\]
is taken to the composition of morphisms in $D(\PP^2)$
\begin{equation*}
\xymatrix{
  & F^\ast[1] \ar[d]^\cong \\
 [\OO_{\PP^2}(-1)^{\oplus c} \ar[r]^\al & \OO_{\PP^2}^{\oplus r+c} \ar[d]^\gamma] \\
  & Q
},
\end{equation*}
where $F$ is the cohomology of the monad given by $\al^\ast$, i.e.\ $F = \kernel (\al^\ast)$.  The argument in the proof of Lemma \ref{lemma6} can then be easily adapted to show that $h$ induces a map $\tilde{h} : B_2^s (r,d,c;l)/\thicksim_{B_2} \to B_1^s(r,d,c;l)/\thicksim_{B_1}$ on the sets of equivalence classes, and that it is bijective.
\end{proof}

\begin{lemma}\label{lemma9}
Suppose $X = \PP^2$ and $c+d=0$.   Then the set $B_2^s(r,d,c; l)/\thicksim_{B_2}$ is in bijection with the set of closed points of a quotient stack.
\end{lemma}
\begin{proof}
Since all 0-dimensional sheaves are Gieseker semistable, for any nonnegative integer $l$, there is a quot scheme $Quot_l$ parametrising the isomorphism classes of  0-dimensional sheaves on $\PP^2$ of length $l$ (see \cite{Niture}, for instance).

Given any element
\[
 \OO (-1)_{\PP^2}^{\oplus c} \overset{\al}{\to} \OO_{\PP^2}^{\oplus r+c} \overset{\gamma}{\to} Q
\]
of the set  $B_2^s(r,d,c;\Lambda)$, we can associate to it a closed point $(\al, Q,\gamma)$ of the scheme
\[
\PP \Hom (\OO (-1)^{\oplus c}_{\PP^2},\OO^{\oplus r+c}_{\PP^2})  \times Quot_l \times  \PP \Hom(\OO_{\PP^2}^{\oplus r+c},Q).
  \]
  Note that the scheme $\PP \Hom(\OO_{\PP^2}^{\oplus r+c},Q)$ only depends on the length $l$ of $Q$.  That is, we have a set-theoretic map
\[
  \phi : B_2^s(r,d,c; l) \to \PP \Hom (\OO (-1)^{\oplus c}_{\PP^2},\OO^{\oplus r+c}_{\PP^2})  \times Quot_l \times \PP \Hom(\OO_{\PP^2}^{\oplus r+c},Q).
\]
The image of $\phi$ is the set of all triples $(\al,Q,\gamma)$ such that $\gamma \al=0$, and $\al$ lies in the open subspace of $\PP \Hom (\OO_{\PP^2} (-1)^{\oplus c}_{\PP^2},\OO^{\oplus r+c}_{\PP^2})$ where  $\cokernel (\al) \in V'$ (here, $V'$ is as defined in the proof of Lemma \ref{lemma7}).  Let $W$ denote the image of $\phi$; then $W$ is a quasi-projective scheme.

We can now adapt the arguments in the proof of Lemma \ref{_quotient_lemma_} to our current situation: the equivalence relation $\thicksim_{B_2}$ on $B_2^s(r,d,c; l)$ induces an equivalence relation on $W$, which we also denote by $\thicksim_{B_2}$.  We can define an action of $G:= \Aut (\OO_{\PP^2}(-1)^{\oplus c}) \times \Aut (\OO_{\PP^2}^{\oplus r+c})$ on $B_2^s(r,d,c; l)$, which induces a well-defined action on $W$ and makes $\phi$ a $G$-equivariant map.  Finally, we show that two elements in $W$ are equivalent with respect to $\thicksim_{B_2}$ if and only if they differ by an action of $G$.  Hence $B_2^s(r,d,c; l)/\thicksim_{B_2}$ is in bijection with the set of closed points of a quotient stack $[W/G]$.
\end{proof}

Let $\mathfrak{M}$ be any moduli stack of  complexes in $D(\PP^2)$ such that any point of $\mathfrak{M}$ corresponds to a complex $E$ such that:
 \begin{itemize}
 \item $H^{-1}(E)$ is torsion-free and $\mu$-semistable;
 \item $H^0(E)$ is a 0-dimensional sheaf;
 \item $H^i(E)=0$ for all $i \neq -1, 0$.
 \end{itemize}
For example, $\mathfrak{M}$ could be a moduli stack of Bridgeland-semistable or polynomial stable complexes (see \cite[Lemma 4.2]{Bayer} or \cite[Lemma 5.3]{LQ}).  Lemma \ref{lemma8} and Lemma \ref{lemma9} together give:

\begin{theorem}\label{P2main-result}
The closed points of the strata of $\mathfrak{M}$ consisting of complexes $E$ on $\PP^2$ further satisfying:
\begin{itemize}
\item $H^{-1}(E)$ is normalised, and
\item $\degree (H^{-1}(E)) - \chi (H^{-1}(E)(-1))=0$,
\end{itemize}
are in 1-1 correspondence with the closed points of some quotient stacks.
\end{theorem}

\section{Proof of Theorem \ref{_theorem_main_}}\label{section-proofmaintheorem}

This section is devoted to the proof of Theorem \ref{_theorem_main_}. We follow closely the idea in the proof of Theorem D in \cite{BR}. In fact,
many of the key lemmas, for instance, Lemma \ref{_lemma_1_}, \ref{_lemma_3_}, and \ref{_lemma_P3_} in this section were adapted from \cite{BR}.
To prove Theorem \ref{_theorem_main_}, we start with the following observations:

\begin{lemma}\label{_lemma_0_}
The Todd classes for $\P^2$ and $\P^3$ are given by
\begin{eqnarray*}
td(\P^2) &=& \left( 1, \frac{3}{2}H, H^2\right);\\
td(\P^3) &=& \left( 1, 2H, \frac{11}{6}H^2, H^3\right),
\end{eqnarray*}
where $H$ is the cohomology class represented by the hyperplane section in $\P^2$ or $\P^3$.
\end{lemma}

\begin{proof}
The proof is standard.
\end{proof}

\begin{lemma}\label{lemma4}
Let $F$ be any sheaf on $\P^3$, and let $ch(F)=(ch_0, ch_1, ch_2, ch_3)$. Let $F_H$ be the restriction of $F$ to
any hyperplane section $\P^2$, then we have $ch(F_H)=(ch_0, ch_1, ch_2)$, i.e. when we restrict a sheaf $F$ from
$\P^3$ to a hyperplane $\P^2$, we can simply drop the last component of the Chern character.
\end{lemma}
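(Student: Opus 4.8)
The plan is to reduce the computation on $\P^2$ to one on $\P^3$ by pushing forward along the inclusion $i\colon\P^2\hookrightarrow\P^3$ of the hyperplane, and then to invoke Grothendieck--Riemann--Roch (in the spirit of Lemma \ref{_lemma_0_}). Write $s\in H^0(\P^3,\OO_{\P^3}(1))$ for the linear form with $\{s=0\}=\P^2$, so that
\[
0\lra\OO_{\P^3}(-1)\overset{s}{\lra}\OO_{\P^3}\lra i_\ast\OO_{\P^2}\lra 0
\]
is the Koszul resolution of $i_\ast\OO_{\P^2}$. Tensoring with $F$ yields the right-exact sequence $F(-1)\overset{s}{\to}F\to i_\ast F_H\to 0$, and the first map is injective: in the reflexive (hence torsion-free) case relevant to this paper every associated prime of $F$ has codimension $0$, so $s$ is a nonzerodivisor on $F$ — for a completely arbitrary sheaf one would instead restrict to a general hyperplane, chosen to avoid the associated points. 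Hence
\[
0\lra F(-1)\lra F\lra i_\ast F_H\lra 0
\]
is exact in $\Coh(\P^3)$.

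First I would apply the Chern character, which is additive on short exact sequences, to get $ch(i_\ast F_H)=ch(F)-ch(F(-1))=ch(F)\cdot(1-e^{-H})$ in $H^\ast(\P^3)$. Next, Grothendieck--Riemann--Roch for the closed immersion $i$ — equivalently the identity $ch(i_\ast G)=i_\ast\big(ch(G)\cdot td(N)^{-1}\big)$ with normal bundle $N=\OO_{\P^2}(1)$, which follows from the projection formula together with Lemma \ref{_lemma_0_} — rewrites the left-hand side in terms of $ch(F_H)$. Since $td(N)^{-1}=(1-e^{-H})/H$ while the Gysin map $i_\ast$ raises cohomological degree by one, the factor $1-e^{-H}$ is exactly absorbed, and comparing the two expressions degree by degree forces $ch(F_H)=ch_0+ch_1H+ch_2H^2$, i.e. $ch(F_H)=(ch_0,ch_1,ch_2)$.

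Equivalently, and more conceptually: the Chern character commutes with pullback, so $ch(F_H)=ch(Li^\ast F)=i^\ast ch(F)$, and $i^\ast\colon H^\ast(\P^3)=\ZZ[H]/(H^4)\to H^\ast(\P^2)=\ZZ[H]/(H^3)$ is just the truncation sending $H^k\mapsto H^k$ for $k\le 2$ and $H^3\mapsto 0$, which is literally the content of the statement. I do not expect a genuine obstacle here; the only point worth spelling out is that the honest restriction $F_H$ agrees with the derived restriction $Li^\ast F$, i.e. that $\mathcal{T}or^{\OO_{\P^3}}_{j}(F,\OO_{\P^2})=0$ for $j>0$, and this is once more the nonzerodivisor statement above (a hyperplane in $\P^3$ contains no codimension-zero point, so the vanishing holds whenever $F$ is torsion-free).
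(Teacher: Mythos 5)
Your main argument is the same as the paper's: the Koszul sequence $0\to F(-1)\to F\to i_\ast F_H\to 0$ followed by Grothendieck--Riemann--Roch for the inclusion $i\colon\P^2\hookrightarrow\P^3$, so the proposal is correct and matches the paper's proof. Your added observation that left-exactness (equivalently the vanishing of the higher $\mathcal{T}or$'s, so that the honest restriction $F_H$ agrees with $Li^\ast F$) requires $F$ to be torsion-free or the hyperplane to be chosen generically is a worthwhile refinement that the paper leaves implicit; the lemma is indeed only ever applied to torsion-free sheaves restricted to generic hyperplanes.
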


\begin{proof}
Let $i: \P^2\to \P^3$ be the inclusion of a hyperplane section.
From the short exact sequence $$0 \to F(-H) \to F \to i_*F_H \to 0,$$ we can obtain
\begin{equation*}
ch(i_*F_H)=\left(0, ch_0, ch_1-\frac12 ch_0, ch_2-\frac12 ch_1+\frac16 ch_0\right).
\end{equation*}
By Grothendieck-Riemann-Roch, we have
$$ch(i_*F_H)\cdot td(\P^3)=i_*(ch(F_H)\cdot td(\P^2)).$$
Applying Lemma \ref{_lemma_0_} and expanding both sides of the equation above, we get
$$ch(F_H)=(ch_0, ch_1, ch_2).$$
\end{proof}

Recall that, for any torsion-free sheaf $F$ of rank $r$ on $\PP^N$ where $N\geq 1$, the restriction $F|_L$ of $F$ to a generic line $L \subset \PP^N$  takes the form $F|_L \cong \oplus_{i=1}^r \OO_L (b_i)$ for some integers $b_1 \geq b_2 \geq \cdots \geq b_r$.  The sequence of integers $\mathbf{b} := (b_1,\cdots,b_r)$ is called the splitting type of $F$; we usually write $\OO_{\PP^N} (\mathbf{b})$ to denote $\oplus_{i=1}^r \OO_{\PP^N} (b_i)$.  Note that $c_1 (\OO_{\PP^N} (\mathbf{b})) = \sum_{i=1}^r b_i$.

\begin{lemma}\label{_lemma_1_}
Let $F$ be a torsion free rank $r$ sheaf on $\P^N$ with splitting type $\b=(b_1, \cdots,  b_r)$,
then we have the following bounds for the dimension of the lowest and highest cohomology groups of $F$:
\begin{eqnarray*}
h^0 F &\leq& h^0\cO_{\P^N}(\b),\\
h^N F &\leq& h^0\cO_{\P^N}(-\b-N-1).
\end{eqnarray*}
\end{lemma}

\begin{proof}
We prove the inequalities by induction on $N$. When $N=1$, the first inequality is an identity, and the second one
simply follows from Serre duality. We let $k$ be an arbitrary integer, and $H$ be a general hyperplane section.

For the first inequality, we have the short exact sequence
\begin{equation*}
0 \to F(k-1) \to F(k) \to F_H(k) \to 0,
\end{equation*}
which implies an exact sequence of cohomology
$$0 \to H^0 F(k-1) \to H^0 F(k) \to H^0 F_H(k).$$ Therefore, we have
$$h^0 F(k) \leq h^0 F(k-1) + h^0 F_H(k).$$ From this and the induction hypothesis, we have
\begin{eqnarray*}
h^0 F(k) - h^0 F(k-1) &\leq& h^0 F_H(k)\\
 &\leq& h^0 \cO_H(\b+k)\\
 &=& h^0\cO_{\P^N}(\b+k) - h^0\cO_{\P^N}(\b+k-1).
\end{eqnarray*}
Note that when $k\ll 0$, we have the vanishing of $h^0 F(k)$ and $h^0 \cO_{\P^N}(\b+k)$. Therefore by adding
the above equation for consecutive values of $k$ we get $$h^0 F\leq h^0 \cO_{\P^N}(\b),$$ proving the first inequality.

For the second inequality, we have an exact sequence of cohomology from the above short exact sequence of sheaves
$$H^{N-1}F_H(k+1) \lra H^N F(k) \lra H^N F(k+1) \lra 0.$$
Therefore, together with the induction hypothesis,
\begin{eqnarray*}
h^N F(k) - h^N F(k+1) &\leq& h^{N-1} F_H(k+1)\\
 &\leq& h^0 \cO_H(-\b-N-k-1)\\
 &=& h^0 \cO_{\P^N}(-\b-N-k-1) - h^0 \cO_{\P^N}(-\b-N-k-2).
\end{eqnarray*}
When $k\gg 0$, we have vanishing of $h^N F(k)$ and $h^0 \cO_{\P^N}(-\b-k-N-1)$. Hence adding both sides for all $k \geq 0$ and cancelling
gives $$h^N F \leq h^0 \cO_{\P^N}(-\b-N-1).$$
\end{proof}

\begin{lemma}\label{_lemma_2_}
For a torsion free sheaf $F$ on $\P^2$ with splitting type $\b=(b_1, \cdots,  b_r)$ and
Chern character $ch(F)=(r, c_1(F), ch_2(F))$, we have
\begin{eqnarray*}
h^0 F &\leq& \displaystyle\sum_{i=1}^r\binom{b_i+2}{2};\\
h^1 F &\leq& -\left(r+\frac32c_1(F)+ch_2(F)\right)+\displaystyle\sum_{i=1}^r\binom{b_i+2}{2};\\
h^2 F &\leq& \displaystyle\sum_{i=1}^r\binom{b_i+2}{2}.
\end{eqnarray*}
\end{lemma}

\begin{proof}
The bound for $h^0$ and $h^2$ follow directly from Lemma \ref{_lemma_1_}.
\begin{eqnarray*}
h^0 F &\leq& h^0\cO_{\P^2}(\b) \leq \displaystyle\sum_{i=1}^r\binom{b_i+2}{2}\\
h^2 F &\leq& h^0\cO_{\P^2}(-\b-3) \leq \displaystyle\sum_{i=1}^r\binom{-b_i-1}{2}=\sum_{i=1}^r\binom{b_i+2}{2}.
\end{eqnarray*}

To get a bound for $h^1 F$, we use $h^1 F = h^0 F + h^2 F -\chi(F)$.  We have
\begin{eqnarray*}
h^0 F+h^2 F &\leq& h^0\cO_{\P^2}(\b)+h^0\cO_{\P^2}(-\b-3)\\
 &\leq& \displaystyle\sum_{i=1}^r(h^0\cO_{\P^2}(b_i) + h^0\cO_{\P^2}(-b_i-3))
\end{eqnarray*}
Note that $h^0\cO_{\P^2}(b_i)$ and $h^0\cO_{\P^2}(-b_i-3)$ cannot be both positive, and
the positive one is equal to $\binom{b_i+2}{2}$, so
$$h^0 F + h^2 F \leq \displaystyle\sum_{i=1}^r\binom{b_i+2}{2}.$$
On the other hand, by Grothendieck-Riemann-Roch and Lemma \ref{_lemma_2_}, we have
\begin{eqnarray*}
\chi(F) &=& \int ch(F)\cdot td(\P^2)\\
&=& \int (r, c_1(F), ch_2(F))\cdot \left(1, \frac32H, H^2\right)\\
&=& r+\frac32 c_1(F)+ ch_2(F).
\end{eqnarray*}
Hence we get the bound for $h^1 F$ as well.
\end{proof}

\begin{coro}\label{_corollary_invariance_}
We can also write the bound for $h^1 F$ in Lemma \ref{_lemma_2_} as
\begin{equation}\label{eq2}
h^1 F\leq -ch_2(F)+ \frac12\sum_{i=1}^r b_i^2.
\end{equation}
 Furthermore, this bound is invariant
under twist and taking dual.
\end{coro}

\begin{proof}
We divide the proof into three steps.

{\it Step 1.} By using the fact that $c_1(F)=\sum_{i=1}^r b_i$, the stated inequality can be translated
directly from the inequality in Lemma \ref{_lemma_2_}.

{\it Step 2.} For the invariance under twist,  we replace $F$ by $F(k)=F\otimes\cO_{\P^2}(kH)$, then
$$b_i(F(k))=b_i+k,$$ and
\begin{eqnarray*}
& &ch(F(k))=ch(F)\cdot ch(\cO_{\P^2}(kH))=ch(F)\cdot \left( 1, kH, \frac{k^2H^2}{2}\right)\\
&=&\left(r, c_1(F)+r\cdot kH, ch_2(F)+c_1(F)\cdot kH+r\cdot \frac{k^2H^2}{2}\right).
\end{eqnarray*}
Now by \eqref{eq2}, we have
\begin{eqnarray*}
h^1 F(k) &\leq& \left(\frac12\sum_{i=1}^r(b_i+kH)^2\right)-ch_2(F(k))\\
&=& \left(\frac12\sum_{i=1}^r b_i^2 + \frac12 r k^2+ k\cdot\sum_{i=1}^r b_i \right) - \left(ch_2(F)+r\cdot \frac12 k^2+\sum_{i=1}^r b_i\cdot k\right)\\
&=& -ch_2(F) + \frac12\sum_{i=1}^r b_i^2
\end{eqnarray*}
This shows the invariance of the upper bound for $h^1F$ under twist.

{\it Step 3.} If we replace $F$ by $F^\ast$, then every $b_i$ becomes $-b_i$, and the Chern character $(ch_0(F), ch_1(F), ch_2(F))$ becomes $(ch_0(F), -ch_1(F), ch_2(F))$. Thus the bound
$\frac12\sum_{i=1}^r b_i^2-ch_2(F)$ remains the same.
\end{proof}

\begin{lemma}\label{_lemma_3_}
For a torsion free sheaf $F$ on $\P^2$ with  splitting type $\b=(b_1, \cdots,  b_r)$ and
Chern character $ch(F)=(r, c_1(F), ch_2(F))$, there exists an integer $Q>0$ which only depends
on $\b$ and $ch(F)$, such that $$h^1 F(k)=h^2 F(k)=h^0 F(-k)=h^1 F(-k)=0$$ for all $k\geq Q$.
\end{lemma}

\begin{proof}
We give a choice for $Q$ in 4 steps. For convenience we denote
\begin{eqnarray*}
b_{max}=\max\{b_1, \cdots, b_r\}\\
b_{min}=\min\{b_1, \cdots, b_r\}.
\end{eqnarray*}

{\it Step 1.} By Lemma \ref{_lemma_1_}, $$h^0 F(-k)\leq h^0\cO_{\P^2}(\b-k).$$
Therefore we have $h^0 F(-k)=0$ when $$k>b_{max}.$$

{\it Step 2.} By Lemma \ref{_lemma_1_}, $$h^2 F(k)\leq h^0\cO_{\P^2}(-\b-k-3).$$
Therefore we have $h^2 F(k)=0$ when $$k>-b_{min}-3.$$

{\it Step 3.} Consider the exact sequence
$$0 \to F(k-1) \to F(k) \to F_H(k) \to 0.$$
Note that here $H=\P^1$ and $F_H(k)=\cO_{\P^1}(\b+k)$. So we have $H^1 F_H(k)=0$
when $k>-b_{min}$ (in fact, $h^1\cO_{\P^1}(m)=0$ whenever $m \geq -1$), in which case  we have the exact sequence of cohomology
$$H^0 F(k) \to H^0 F_H(k) \to H^1 F(k-1) \to H^1 F(k) \to 0.$$
We claim that, if for a certain $k>-b_{min}$,
the map $H^1 F(k-1) \to H^1 F(k)$ is not only surjective but also injective, then for all larger
values of $k$, it is also injective.

Note that, for $k> -b_{min}$, the injectivity of the map $H^1 F(k-1) \to H^1 F(k)$ is equivalent to
the surjectivity of the map $H^0 F(k) \to H^0 F_H(k)$.  Consider the following commutative
diagram
\begin{displaymath}
\xymatrix{
H^0 F(k) \otimes H^0\cO(1) \ar[r] \ar[d] & H^0 F_H(k) \otimes H^0 \cO_H(1) \ar[d]\\
H^0 F(k+1) \ar[r] & H^0 F_H(k+1).
}
\end{displaymath}
The surjectivity of the upper horizontal arrow and the right vertical arrow together imply the surjectivity
of the lower horizontal arrow.  The claim in the last paragraph then follows.  As a result, for $k> -b_{min}$, as $k$ increases, the dimension of $H^1 F(k-1)$ must be strictly decreasing in the beginning, and then becomes constant.  Since for $k$ sufficiently large,
$H^1 F(k)=0$, we have the vanishing of $H^1 F(k)$ for $$k>-b_{min}+h^1 F(-b_{min}),$$
which can be controlled by the formula in Lemma \ref{_lemma_2_}.

{\it Step 4.} By Serre duality, we have
$h^1 F(-k)=h^1 F^\ast (k-3)$.  Therefore, applying the argument in Step 3 to $F^\ast$ (and noting that replacing $F$ by $F^\ast$ changes the sign of each integer in the splitting type), we see that  $H^1 F(-k)$ vanishes for $$k>b_{max}+3+h^1 F^\ast (b_{max}),$$ where the $b_i$ still denote the splitting type of $F$.  The right-hand side of this inequality depends only on the splitting type and the Chern character of $F$ by Lemma
\ref{_lemma_2_}.

Taking the largest one in the bounds we obtained in the above four steps, we have a concrete
estimate of $Q$ for the vanishing of the four cohomology groups in the statement of the Lemma.
\end{proof}

\begin{lemma}\label{_lemma_P3_}
For a torsion free sheaf $F$ on $\P^3$ with  splitting type $\b=(b_1, \cdots,  b_r)$ and
Chern character $ch(F)=(r, c_1(F), ch_2(F), ch_3(F))$, there is an upper bound for the dimension
of its cohomology groups, which only depends on the splitting type and the Chern character of $F$. 
\end{lemma}

\begin{proof}
Similar to in the previous lemma, we give the proof in 3 parts.

{\it Step 1.} By Lemma \ref{_lemma_1_}, we immediately get
\begin{eqnarray*}
h^0 F &\leq& h^0 \cO_{\P^3}(\b);\\
h^3 F &\leq& h^0 \cO_{\P^3}(-\b-4).
\end{eqnarray*}

{\it Step 2.} For an upper bound for $h^2 F$, we use a short exact sequence
$$0 \to F(k) \to F(k+1) \to F_H(k+1) \lra 0,$$
and get an exact sequence for cohomology groups
\begin{eqnarray*}
\cdots &\to & H^1 F_H(k+1) \to \\
\to H^2 F(k) \to H^2 F(k+1) &\to & H^2 F_H(k+1) \to \cdots.
\end{eqnarray*}
Let $Q$ be the constant for the sheaf $F_H$ as  derived in Lemma \ref{_lemma_3_}. When $k\geq Q-1$,
we have $$H^1 F_H(k+1)=H^2 F_H(k+1)=0,$$ which results in $$H^2 F(k)=H^2 F(k+1).$$
By Serre's vanishing theorem, we know $H^2 F(k)$ vanishes because it is the case when $k$ is
sufficiently large. In particular, we have $$H^2 F(Q-1)=0.$$
When $k<Q-1$, the above long exact sequence shows that
$$h^2 F(k)\leq h^2 F(k+1)+h^1 F_H(k+1). $$ We can add all such inequalities for all values of $k$ between $0$
and $Q-2$ and obtain
$$h^2 F\leq h^2 F(Q-1) + \sum_{j=1}^{Q-1} h^1 F_H(j) = \sum_{j=1}^{Q-1} h^1 F_H(j),$$
where all the $h^1 F_H(j)$ have an explicit upper bound by the formula in Lemma \ref{_lemma_2_}.

Note that, here, the choice of $Q$ only depends on the splitting type and the Chern character of $F_H$.  However, the Chern character of $F_H$ depends entirely on that of $F$ by Lemma \ref{lemma4}.  Also, restricting to a generic hyperplane does not alter the splitting type of a torsion-free sheaf.  Hence $Q$ depends only on the splitting type and the Chern character of $F$ itself.

{\it Step 3.} For an upper bound for $h^1 F$, we follow the same procedure as in the previous step.
From the short exact sequence $$0\to F(-k-1) \to F(-k) \to F_H(-k) \to 0,$$ we get the long exact
sequence of cohomology groups
\[
\cdots \to  H^0 F_H(-k) \to H^1 F(-k-1) \to H^1 F(-k) \to H^1 F_H(-k) \to \cdots.
\]
When $k\geq Q$, we have $$H^0 F_H(-k)=H^1 F_H(-k)=0$$ by Lemma \ref{_lemma_3_}. Therefore
$$H^1 F(-k)=H^1 F(-k-1),$$ which vanishes due to Serre's vanishing theorem. In particular, $$H^1 F(-Q)=0.$$
When $k<Q$, we have $$h^1 F(-k)\leq h^1 F(-k-1) + h^1 F_H(-k).$$ Summation and cancellation
results in $$h^1 F\leq h^1 F(-Q)+\sum_{j=0}^{Q-1} h^1 F_H(-j)=\sum_{j=0}^{Q-1}h^1 F_H(-j),$$
where $h^1 F_H(-j)$ can still be bounded by the formula in Lemma \ref{_lemma_2_}.
\end{proof}

So far we have derived all necessary results for general torsion free sheaves on $\P^2$ and $\P^3$,
from now on we want to apply these results to reflexive semistable sheaves.

\begin{lemma}
For a reflexive $\mu$-semistable sheaf $F$ of splitting type $\b=(b_1, b_2, \cdots, b_r)$ on  $\P^3$,
assuming that $b_1\geq \cdots \geq b_r$, we have $$b_i-b_{i+1}\leq 2$$ for each $1\leq i\leq r-1$.
\end{lemma}

\begin{proof}
We consider the grassmannian $G(1,3)$ of all lines in $\P^3$, which is a quadric hypersurface in $\P^5$
via the Pl\"{u}cker embedding. By the definition of splitting type, there is an open sense subset $U\subset G(1,3)$,
such that for any line $L\in U$, the restriction of $F$ on $L$ decomposes in the same way as specified by
the splitting type.

Choose a smooth hyperplane section $S\subset G(1,3)$ whose intersection with $U$ is non-empty. Then
$S\cap U$ is open and dense in $S$. For any line $L\in S \cap U$, the decomposition of the restriction of
$F$ on $L$ agrees with the splitting type. By \cite[Proposition 7.1]{EHV}, we know
that any gap in the splitting type is at most 2.
\end{proof}

\begin{coro}\label{lemma5} 
For a reflexive $\mu$-semistable sheaf of rank $r$ on $\mathbb{P}^3$ with splitting type $\mathbf{b} = (b_1,\cdots,b_r)$, we have
\begin{equation}\label{eq3}
\pm b_i \leq \frac{|c_1|}{r}+r.
\end{equation}
for each $i$.
\end{coro}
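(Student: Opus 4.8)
The plan is to deduce the bound \eqref{eq3} from the previous lemma, which controls the gaps $b_i - b_{i+1} \leq 2$ in the splitting type, together with the fact that $\sum_{i=1}^r b_i = c_1$ (identifying $c_1$ with the integer $c_1(F)$ under $H$). First I would fix the convention $b_1 \geq b_2 \geq \cdots \geq b_r$ as in the lemma, so that the successive differences $b_i - b_{i+1}$ are all nonnegative and bounded above by $2$. Summing these differences telescopically gives $b_1 - b_r \leq 2(r-1)$, so the whole splitting type lies in an interval of length at most $2(r-1)$.

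Next I would use the averaging observation: since the $b_i$ average to $c_1/r$, at least one index $j$ satisfies $b_j \leq c_1/r$ and at least one index $\ell$ satisfies $b_\ell \geq c_1/r$. Combining $b_\ell \geq c_1/r$ with $b_i \leq b_\ell + 2(r-1)$ is too crude, so instead I would argue directly: for any $i$,
\[
b_i \leq b_1 \leq b_r + 2(r-1) \leq \frac{c_1}{r} + 2(r-1),
\]
using $b_r \leq c_1/r$ (as the minimum is at most the average). Symmetrically,
\[
b_i \geq b_r \geq b_1 - 2(r-1) \geq \frac{c_1}{r} - 2(r-1),
\]
using $b_1 \geq c_1/r$. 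Hence $|b_i - c_1/r| \leq 2(r-1)$, which yields $\pm b_i \leq |c_1|/r + 2(r-1)$. Since $2(r-1) \leq r$ fails for $r \geq 3$ but the paper writes $+r$ — I would double-check the constant here; more likely the intended crude bound is $\pm b_i \leq |c_1|/r + 2(r-1)$, and one simply weakens $2(r-1)$ to something cleaner, or the statement should read $+2r$. I would state the estimate with whatever explicit constant the previous gap lemma actually delivers and note that any such constant depending only on $r$ suffices for the finite-type application; matching the exact form $|c_1|/r + r$ in \eqref{eq3} is the one place requiring care.

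The main (and only) obstacle is therefore purely bookkeeping: making sure the gap estimate $b_i - b_{i+1} \leq 2$, the telescoping range $b_1 - b_r \leq 2(r-1)$, and the averaging inequalities $b_r \leq c_1/r \leq b_1$ combine to give a constant no larger than the $r$ appearing in the displayed inequality. If the constant $2(r-1)$ is genuinely larger than $r$, I would either (i) invoke a sharper gap bound — for locally free $\mu$-semistable sheaves the gap is $\leq 1$ (cf.\ the commented-out Lemma), giving $b_1 - b_r \leq r-1 \leq r$ and hence exactly \eqref{eq3} — noting that a reflexive sheaf on $\P^3$ restricted to a generic line is locally free, or (ii) simply record the weaker explicit constant, since Corollary~\ref{theorem-bigfunctorfinite} only needs \emph{some} bound depending on $ch_0, ch_1$. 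I expect the clean route is: restrict to a generic line where $F$ is locally free, apply the locally free gap bound $b_i - b_{i+1} \leq 1$, telescope to get $b_1 - b_r \leq r-1$, and conclude $\pm b_i \leq |c_1|/r + r$ directly.
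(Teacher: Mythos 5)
You have the right ingredients --- the paper offers no proof of this Corollary and clearly intends it as an immediate consequence of the preceding gap lemma ($b_i-b_{i+1}\leq 2$) together with $\sum_{i=1}^r b_i=c_1$ --- but the way you combine them is too lossy, and this leads you to doubt a constant that is in fact correct. Writing $b_1\leq b_r+2(r-1)$ and then $b_r\leq c_1/r$ stacks the full spread of the splitting type on top of the average and produces $2(r-1)$. Instead, average the telescoped inequalities themselves: from $b_j\geq b_1-2(j-1)$ for all $j$ you get
\[
c_1\;=\;\sum_{j=1}^{r}b_j\;\geq\;rb_1-2\sum_{j=1}^{r}(j-1)\;=\;rb_1-r(r-1),
\]
so $b_1\leq c_1/r+(r-1)$; symmetrically, $b_j\leq b_r+2(r-j)$ gives $b_r\geq c_1/r-(r-1)$. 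Since $b_r\leq b_i\leq b_1$ for every $i$, this yields $\pm b_i\leq |c_1|/r+(r-1)\leq |c_1|/r+r$, which is exactly \eqref{eq3}. There is no need to weaken the constant, to restate the corollary, or to upgrade the gap bound.

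Two smaller remarks. Your fallback (ii) is a legitimate observation --- the downstream finiteness results only need \emph{some} bound depending on $r$ and $c_1$ --- but it is unnecessary once the averaging is done as above. Your fallback (i), however, does not work as justified: the commented-out OSS lemma has as its hypothesis that the sheaf on $\P^n$ itself is locally free and semistable, and the observation that a reflexive sheaf restricts to a locally free sheaf on a generic line gains you nothing, since that is true of every torsion-free sheaf and is already built into the definition of splitting type. This is presumably why the authors replaced that statement with the EHV-based gap-$2$ lemma, which is stated for reflexive sheaves; as shown above, gap $2$ already suffices for \eqref{eq3}.
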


\begin{lemma}\label{_lemma_vanishing_constant_}
Let $E$ be a $\mu$-semistable reflexive sheaf on $\P^3$ with splitting type $\mathbf{b} = (b_1,\cdots,b_r)$,
and $F$ be the restriction of $E$ to a generic hyperplane. Then the constant $Q$ for $F$ in Lemma \ref{_lemma_3_} can be chosen as
$$Q=\frac{|c_1|}{r}+r+4-ch_2 + \frac12\sum_{i=1}^r b_i^2.$$
\end{lemma}

\begin{proof}
From the proof of Lemma \ref{_lemma_3_}, we know that the vanishing conditions for the four cohomology groups $H^1F(k), H^2F(k), H^0F(-k),H^1F(-k)$ are:
 \begin{align*}
   k &>b_{max},\\
    k &>-b_{min}-3, \\
    k &>-b_{min}+h^1 F(-b_{min}), \text{ and } \\
   k &>b_{max}+3+h^1 F^\ast (b_{max}).
 \end{align*}
By Corollary \ref{lemma5}, we have that
\begin{equation*}
\pm b_{max}, \pm b_{min}\leq \frac{|c_1|}{r}+r.
\end{equation*}
And by Lemma \ref{_corollary_invariance_}, we have that
\begin{eqnarray*}
h^1 F(-b_{min}) \leq -ch_2 + \frac12\sum_{i=1}^r b_i^2;\\
h^1 F^\ast (b_{max}) \leq -ch_2 + \frac12\sum_{i=1}^r b_i^2.
\end{eqnarray*}
Hence  the value of $Q$ as proposed in the statement of the lemma.
\end{proof}

Now we summarize all the above results for sheaves on $\P^2$ to get a bound for the Euler characteristic of
a $\mu$-semistable reflexive sheaf on $\P^3$.

\begin{lemma}\label{_lemma_euler_}
For any $\mu$-semistable reflexive sheaf $F$ on $\P^3$ with  splitting type
$\b=(b_1, \cdots,  b_n)$ and Chern character $ch(F)=(n, c_1, ch_2, ch_3)$, there is a
bound for its Euler characteristic $\chi(F)$, depending only on $ch_0(F),ch_1(F)$ and $ch_2(F)$.
\end{lemma}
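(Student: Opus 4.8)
The idea is to bound $\chi(F)$ using $\chi(F) = h^0F - h^1F + h^2F - h^3F$, and to bound each cohomology group of $F$ individually by the explicit estimates we have already set up. By Lemma~\ref{_lemma_P3_}, every $h^iF$ admits an upper bound depending only on the splitting type $\b = (b_1,\dots,b_n)$ and the Chern character of $F$. The remaining task is to absorb the dependence on the splitting type into a dependence on $ch_0(F), ch_1(F), ch_2(F)$ alone, using the hypothesis that $F$ is $\mu$-semistable and reflexive.

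First I would invoke Corollary~\ref{lemma5}, which gives $|b_i| \leq \frac{|c_1|}{n} + n$ for every $i$; since $n = ch_0(F)$ and $c_1 = ch_1(F)$, this bounds each $b_i$ — and hence $b_{max}, b_{min}$, and $\sum_i b_i^2$ — purely in terms of $ch_0(F)$ and $ch_1(F)$. Next I would feed this into the explicit constant $Q$ of Lemma~\ref{_lemma_vanishing_constant_}: that lemma exhibits a valid $Q$ for the hyperplane restriction $F_H$ expressed entirely through $|c_1|, n, ch_2$ and $\sum b_i^2$, so $Q$ is now bounded in terms of $ch_0, ch_1, ch_2$. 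With such a $Q$ in hand, the bounds for $h^1F$ and $h^2F$ in Lemma~\ref{_lemma_P3_} are finite sums $\sum_{j} h^1F_H(\pm j)$ of at most $\sim Q$ terms, each term bounded via Corollary~\ref{_corollary_invariance_} (or Lemma~\ref{_lemma_2_}) by $-ch_2(F_H(\pm j)) + \frac12\sum_i (b_i \pm j)^2$, which is polynomial in $j$ and in the data already controlled; and $ch(F_H) = (n, c_1, ch_2)$ by Lemma~\ref{lemma4}. Likewise $h^0F \leq h^0\OO_{\P^3}(\b)$ and $h^3F \leq h^0\OO_{\P^3}(-\b - 4)$ are binomial expressions in the $b_i$, hence bounded. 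Combining, each of $h^0F, h^1F, h^2F, h^3F$ is bounded above by an explicit quantity depending only on $ch_0(F), ch_1(F), ch_2(F)$; since all four are nonnegative, $\chi(F)$ is bounded both above and below, completing the proof.

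The main obstacle, and the only place any care is needed, is the bookkeeping in tracing the chain of dependencies: one must check that $Q$ genuinely depends only on $(ch_0, ch_1, ch_2)$ and not secretly on $ch_3$, and that the sums appearing in the $h^1, h^2$ bounds of Lemma~\ref{_lemma_P3_} — whose number of terms is governed by $Q$ — do not reintroduce uncontrolled quantities. Since Lemma~\ref{lemma4} pins down $ch(F_H)$ from $ch(F)$ with the last component simply dropped, and Corollary~\ref{lemma5} bounds the splitting type from $(ch_0, ch_1)$, these checks go through routinely; there is no genuinely hard analytic step, only the need to assemble the pieces in the right order.
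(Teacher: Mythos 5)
Your proposal is correct and follows essentially the same route as the paper: bound each $h^iF$ via Lemma~\ref{_lemma_P3_} and Lemma~\ref{_lemma_1_}, control the number of terms through the constant $Q$ of Lemma~\ref{_lemma_vanishing_constant_}, bound each summand by Corollary~\ref{_corollary_invariance_} (the paper uses the twist-invariance there to make every term $h^1F_H(\pm j)$ satisfy the \emph{same} bound $-ch_2+\frac12\sum_i b_i^2$, which is marginally cleaner than your polynomial-in-$j$ phrasing but equivalent), and eliminate the splitting type via Corollary~\ref{lemma5}. The dependency bookkeeping you flag is exactly what the paper carries out, and it closes in the same way.
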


\begin{proof}
We first note that, by Lemma \ref{lemma4}, for any hyperplane $H$ in $\P^3$, we have $$ch(F_H)=(n, c_1, ch_2).$$
By Lemma \ref{_lemma_P3_} and Corollary \ref{_corollary_invariance_}, we obtain
\begin{eqnarray*}
h^2 F &\leq& \sum_{i=1}^{Q-1} h^1 F_H(i) \leq Q\cdot \left(-ch_2 + \frac12\sum_{i=1}^r b_i^2\right);\\
h^1 F &\leq& \sum_{i=0}^{Q-1} h^1 F_H(-i) \leq Q\cdot \left(-ch_2 + \frac12\sum_{i=1}^r b_i^2\right).
\end{eqnarray*}
Hence
\begin{eqnarray*}
& & h^1 F + h^2 F <2Q \cdot \left(-ch_2 + \frac12\sum_{i=1}^r b_i^2\right)\\
&=& 2\left(\frac{|c_1|}{r}+r+4-ch_2+\frac12\sum_{i=1}^r b_i^2\right)\left(-ch_2+\frac12\sum_{i=1}^r b_i^2\right) \text{ (by Lemma \ref{_lemma_vanishing_constant_})}\\
&\leq& 2\left(\frac{|c_1|}{r}+r+4-ch_2+\frac12\sum_{i=1}^r \left(\frac{|c_1|}{r}+r\right)^2\right)\left(-ch_2+\frac12\sum_{i=1}^r \left(\frac{|c_1|}{r}+r\right)^2\right),
\end{eqnarray*}
where the last inequality follows from Corollary \ref{lemma5}.

Furthermore, by Lemma \ref{_lemma_1_}, we have
\begin{eqnarray*}
h^0 F &\leq& h^0 \cO_{\P^3}(\b) = \sum_{i=1}^r h^0\cO_{\P^3}(b_i);\\
h^3 F &\leq& h^0 \cO_{\P^3}(-\b-4) = \sum_{i=1}^r h^0\cO_{\P^3}(-b_i-4).
\end{eqnarray*}
Note that for every $b_i$, $h^0\cO_{\P^3}(b_i)$ and $h^0\cO_{\P^3}(-b_i-4)$ cannot be both positive,
therefore,
$$h^0\cO_{\P^3}(b_i) + h^0\cO_{\P^3}(-b_i-4) \leq \left| \frac{(b_i+3)(b_i+2)(b_i+1)}{3\cdot 2\cdot 1} \right|.$$
By Corollary \ref{lemma5} again, for each $i$ we have $b_i\leq\frac{|c_1|}{r}+r$, and hence
$$h^0\cO_{\P^3}(b_i) + h^0\cO_{\P^3}(-b_i-4) < \frac16 \left( \frac{|c_1|}{r}+r+3 \right)^3.$$
So finally we have $$h^0 F+h^3 F<\frac{r}{6} \left( \frac{|c_1|}{r}+r+3 \right)^3.$$

Combining the above results, we get a  bound for $\chi (F)$, the Euler characteristic of $F$, in terms of only $ch_0=n, ch_1=c_1$ and $ch_2$ and not involving $ch_3$:
\begin{eqnarray*}
|\chi(F)| &\leq& h^0 F+h^1 F+h^2 F+h^3 F\\
&<& 2\left(\frac{|c_1|}{r}+r+4-ch_2+\frac12\sum_{i=1}^r \left(\frac{|c_1|}{r}+r\right)^2 \right)\\
& & \cdot\left( -ch_2+\frac12\sum_{j=1}^r \left(\frac{|c_1|}{r}+r\right)^2\right)+\frac{r}{6} \left( \frac{|c_1|}{r}+r+3 \right)^3.
\end{eqnarray*}
\end{proof}

Finally, we are ready to prove Theorem \ref{_theorem_main_}, the Bogomolov-type inequality for $\mu$-semistable reflexive sheaves on $\mathbb{P}^3$.

\begin{proof}[Proof of Theorem \ref{_theorem_main_}]
Continuing the notation above, we write $r=ch_0(F)$ and $c_1=ch_1(F)$ in this proof.
By Grothendieck-Riemann-Roch theorem and Lemma \ref{_lemma_0_},
\begin{eqnarray*}
\chi(F) &=& \int ch(F)\cdot td(\P^3)\\
          &=& \int (r, c_1, ch_2, ch_3)\cdot \left(1, 2H, \frac{11}{6}H^2, H^3\right)\\
          &=& ch_3+2ch_2+\frac{11}{6}c_1+r
\end{eqnarray*}
Then by Lemma \ref{_lemma_euler_}, we obtain \eqref{eq18}.
\end{proof}

\appendix
\section{The two stacks in theorem \ref{_quotient_coro_} are different}\label{app}

In this section, we make an observation that, the moduli stack $\Ac_1^s(ch;l)$ and the quotient stack $[W/G]$ that we considered in Theorem \ref{_quotient_coro_} are \emph{not} isomorphic, although their underlying sets of closed points are bijective.

We consider a very special case: $l=0$. Then any object in $A_1^s(ch;0)$ is a complex $E$ with the only non-trivial cohomology $F=H^{-1}(E)$, which is itself a stable reflexive sheaf. Therefore the definition simply reduces to
\[
A_1^s(ch;0)=\{\text{stable reflexive sheaf } F: ch(F)=ch\}.
\]
And the corresponding stratum $\Ac_1^s(ch;0)$ of the moduli stack \eqref{eq1} becomes the moduli stack of these stable reflexive sheaves with prescribed Chern character $ch$. For $A_3^s(ch;0)$, due to the lack of cohomology in degree 0 for any complex in $A_1^s(ch;0)$, the quotient sheaf $Q$ becomes trivial. It's obvious ``taking dual" is no longer necessary and the definition of $A_3^s(ch;0)$ simply reduces to
\begin{align*}
A_3^s(ch;0)=\{\text{morphism }g: R^{-1} \rightarrow R^0: g \text{ is injective, and } \cokernel (g) \\
\text{ is any stable reflexive sheaf in } A_1^s(ch;0)\}.
\end{align*}

We take a close look at of the two stacks. For the ``moduli" stack $\Ac_1^s(ch;0)$, since it parametrizes simple objects, by \cite[Corollary 4.3]{Lie}, its inertia stack is naturally identified as $\mathbf{G}_m$. In other words, the stabilizer at every closed point of this stack is always $\mathbf{G}_m$. In fact, if we ignore this trivial $\mathbf{G}_m$ action, it is proved in \cite[Theorem 2.2]{MR} that these sheaves can be parametrized by an irreducible non-singular scheme.

On the other hand, due to the vanishing of $Q$, the ``quotient" stack structure becomes $[V'/G]$. We claim that the inertia stack of this quotient stack is not $\mathbf{G}_m$. In fact, take any $g: R^{-1} \rightarrow R^0$ in $V'$, the stabilizer $I_g$ of the $G$ action at this point is given by
\[
I_g=\{(r^{-1},r^0): r^{-1}\in \Aut (R^{-1}), r^0 \in \Aut (R^0), gr^{-1}=r^0g\}.
\]
It has been computed in \cite[Proof of Proposition 2.9]{MR}, that under our assumptions in \eqref{eq4}, we have
\[
\dim I_g=1+\binom{c_2-2r+1}{3}.
\]
In fact, it's easy to find that under the assumptions in \eqref{eq4}, we have
$$c_2-2r+1 \geqslant 4,$$
therefore $\dim I_g$ always has a much larger dimension than $\mathbf{G}_m$. Together with our knowledge of the moduli stack discussed above, we conclude that the two stacks $\Ac_1^s(ch;0)$ is \emph{not} the same as the quotient stack structure in Lemma \ref{_quotient_lemma_}, and therefore Theorem \ref{_quotient_coro_}.

The situation for arbitrary values of $l$ comes with the same spirit, namely, the group $G$ in the quotient stack is too large and produces extra ``stacky" structure which doesn't appear in the moduli stack. For our purpose, the above special case has already showed us that, a bijection between the closed points of the two stacks is the best we can hope for. However, the result still provides us a very concrete way to understand some Bridgeland stable objects in the derived category, which a priori cannot be not so explicitly described.

\end{document}